\newtheorem{theorem}{Theorem}
\newtheorem{lemma}[theorem]{Lemma}
\newtheorem{corollary}[theorem]{Corollary}
\newtheorem{conjecture}{\bf Conjecture}
\newtheorem{proposition}[theorem]{Proposition}
\theoremstyle{remark}
\newtheorem*{remark}{Remark}
\numberwithin{theorem}{section} \numberwithin{equation}{section}
\newcommand{\re}{\textnormal{Re}}
\begin{document}

\title[Donaldson invariants of $\mathbb{C}\mathrm{P}^1 \times \mathbb{C}\mathrm{P}^1$ and mock Theta Functions]
{Donaldson invariants of $\mathbb{C}\mathrm{P}^1 \times \mathbb{C}\mathrm{P}^1$ and mock Theta Functions}

\author{Andreas Malmendier}

\address{Department of Mathematics, Colby College, 
Waterville, Maine, ME 04901}
\email{andreas.malmendier@colby.edu}

\begin{abstract}
We compute the Moore-Witten regularized $u$-plane integral 
on $\mathbb{C}\mathrm{P}^1 \times \mathbb{C}\mathrm{P}^1$ directly in
a chamber where the elliptic unfolding technique fails to work. This allows us to 
determine explicit formulas for the $\mathrm{SU}(2)$ and $\mathrm{SO}(3)$-Donaldson invariants
of $\mathbb{C}\mathrm{P}^1 \times \mathbb{C}\mathrm{P}^1$ in terms of mock modular forms.
\end{abstract}

\maketitle

\section{Introduction and Statement of Results}

There are two families of Donaldson invariants of a smooth, compact, oriented, simply connected Riemannian four-manifold without boundary corresponding to the $\mathrm{SU}(2)$-gauge  theory and the $\mathrm{SO}(3)$-gauge theory with non-trivial Stiefel-Whitney class.  From the viewpoint of theoretical physics \cite{Witten2, Witten3}, these
two families of Donaldson invariants and their related Seiberg-Witten invariants of a manifold are the correlation functions of a supersymmetric topological gauge theory for the gauge group $\mathrm{SU}(2)$ and $\mathrm{SO}(3)$ respectively whose space-time is the given manifold. However, the computation
of these correlation functions is in most cases too complicated to be carried out explicitly.

Using physical considerations, Witten \cite{Witten1} argued that one should be able to compute the correlation functions in a low-energy effective field theory instead. The effective theory has the advantage of being an Abelian supersymmetric topological gauge theory. This means that the data required to define the theory only involves line bundles on the manifold. Seiberg and Witten \cite{SeibergWitten1, SeibergWitten2} argued further
that the moduli of the low-energy effective field theory are parametrized by the modular elliptic surface over 
$\Gamma_0(4) \backslash \mathbb{H}$, henceforth called the $u$-plane.

Based on this effective low-energy description of the quantum theory, 
Witten obtained an explicit formula for the generating function of the Donaldson invariants in terms of the Seiberg-Witten invariants if the manifold has Betti number $b_2^+>1$ and is of simple type. 
The simple type condition is a condition on the structural relation between the Donaldson invariants of different degree.
However, it is conjectured to be equivalent to the condition $b_2^+>1$.
Kronheimer and Mrowka developed a structure theory for the instanton invariants of manifolds of simple type \cite{KronheimerMrowka}.
They found that the generating function for the Donaldson invariants is an analytic function
$q : H^2(X,\mathbb{R}) \to \mathbb{R}$ constructed from the intersection form of the manifold, and a finite number
of certain characteristic classes in $H^2(X,\mathbb{Z})$.

A general framework providing a complete evaluation was later established 
in \cite{MooreWitten}: Moore and Witten obtained the generating function for the correlation functions as a regularized integral over the $u$-plane, henceforth called the $u$-plane integral.
The integrand is a modular invariant function which is determined by the elliptic surface and the gauge group. 
The regularization procedure defines a way of extracting from the integrand certain contributions for each boundary component near the cusps of the modular elliptic surface at $\tau=0,2,\infty$.
Moore and Witten observed that for $b_2^+=1$ the cuspidal
contributions at $\tau=0,2$ to the $u$-plane integral coincided with the generating function for the 
Seiberg-Witten invariants. They went further and made the following
comprehensive conjecture:

\begin{conjecture}[Moore and Witten \cite{MooreWitten}]
\label{MooreWitten}
The contribution from the cusp at $\tau=\infty$ to the regularized $u$-plane integral 
of a smooth, compact, oriented, simply connected four-manifold $X$ without boundary
and $b_2^+=1$ is the generating function for the Donaldson invariants of $X$.
\end{conjecture}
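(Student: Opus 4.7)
The plan is to establish this conjecture in the specific case $X = \mathbb{CP}^1 \times \mathbb{CP}^1$ by direct evaluation of the regularized $u$-plane integral. Since the intersection form on $H^2(X,\mathbb{Z})$ is hyperbolic of rank two, the positive cone has a natural chamber decomposition, and the $u$-plane integrand factors as a modular form on $\Gamma_0(4)$ (the Seiberg-Witten measure) times an indefinite Siegel-Narain theta series $\Theta_\omega$ depending on a period point $\omega$ and coupled to the observables $\exp(p\,u + I(S))$. Because the signature of the lattice is $(1,1)$, $\Theta_\omega$ is non-holomorphic, and this is ultimately the source of the mock modular forms that appear in the final answer.

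First, I would make the integrand explicit for $\mathbb{CP}^1\times\mathbb{CP}^1$ and choose $\omega$ in a symmetric chamber in which the standard Moore-Witten \emph{elliptic unfolding} is unavailable, so that the integral must be computed over an actual fundamental domain of $\Gamma_0(4)$. Next, I would regularize by truncating the fundamental domain near the three cusps $\tau = 0, 2, \infty$ and expressing the integrand as a total $\bar\tau$-derivative of a non-holomorphic completion $\widehat{H}(\tau,\bar\tau,\omega)$; Stokes' theorem then reduces the bulk integral to contributions from the three cusps. The cuspidal contributions at $\tau=0$ and $\tau=2$ would be matched, as in \cite{MooreWitten}, with the generating function of Seiberg-Witten invariants of $\mathbb{CP}^1\times\mathbb{CP}^1$, while the $\tau = \infty$ contribution would be extracted as the constant term of the $q$-expansion of the holomorphic part of $\widehat{H}$ at infinity. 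Finally, I would compare this constant term, as a formal series in the observables $p$ and $S$, with the Kronheimer-Mrowka structure of the Donaldson invariants, cross-checked through the G\"ottsche-Ellingsrud wall-crossing formulas.

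The main obstacle is the explicit identification of the $\tau=\infty$ contribution in a chamber where elliptic unfolding fails. Here the indefinite theta series cannot be traded for a holomorphic Jacobi form by the usual coset unfolding trick, and one is forced to work directly with Zwegers-type mock modular forms whose shadows encode the wall-crossing jumps of the Donaldson invariants across the infinite family of walls accumulating toward the rays of the K\"ahler cone. Verifying that the resulting mock modular expression genuinely reproduces the Donaldson generating function therefore requires a parallel computation of those invariants in the chosen chamber, careful tracking of wall-crossing contributions, and a justification of the interchange between the regularization procedure and the $q$-expansion at infinity.
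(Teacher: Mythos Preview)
First, note that the statement is a \emph{conjecture}: the paper does not prove it in general, and in fact isolates the $\mathrm{SU}(2)$ case on $\mathbb{CP}^2$ as still open (Conjecture~\ref{conjecture_light}). What the paper does establish is the special case $X=\mathbb{CP}^1\times\mathbb{CP}^1$ (Corollary~\ref{MW_CP1xCP1}), so it is fair to compare your plan against that argument.

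Your route and the paper's diverge at the choice of chamber. The paper proves the conjecture for $\mathbb{CP}^1\times\mathbb{CP}^1$ by working in the \emph{limiting} chamber $\omega=\mathrm{F}+$, where the elliptic unfolding technique \emph{does} apply and yields the closed formulas of Theorem~\ref{evaluation3}; these are then matched term by term with G\"ottsche's explicit Donaldson invariants in that chamber. The extension to all other chambers is not done by recomputing the integral, but by invoking the fact that the $u$-plane integral and the Donaldson generating function satisfy the \emph{same} wall-crossing formula (Equation~(\ref{wcross}) versus G\"ottsche's formula, with the Kotschick--Morgan assumption removed by G\"ottsche--Nakajima--Yoshioka). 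In other words, the paper deliberately avoids the obstacle you identify as central. The computation in the symmetric chamber $\omega=\tfrac{1}{2}\mathrm{F}+\mathrm{G}$ via mock modular forms (Proposition~\ref{prop1}) is carried out \emph{after} the conjecture is already known for this $X$, and its purpose is to obtain explicit formulas, not to verify the conjecture.

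There is also a genuine gap in your comparison step: you propose to check the $\tau=\infty$ contribution against ``the Kronheimer--Mrowka structure of the Donaldson invariants,'' but $\mathbb{CP}^1\times\mathbb{CP}^1$ has $b_2^+=1$ and is not of simple type, so the Kronheimer--Mrowka structure theorem does not apply here (the paper emphasizes exactly this point in the introduction). The only available independent computation of the Donaldson side is G\"ottsche's, and that is formulated in the limiting chamber --- which brings you back to the paper's strategy. Your direct approach in the hard chamber would produce the mock modular expressions, but would not by itself furnish an independent Donaldson-side formula to compare against.
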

\noindent
Based on the work in \cite{MooreWitten, EllingsrudGoettsche, Goettsche, GoettscheZagier, MalmendierOno} 
it follows (cf. Theorem \ref{MainTheorem_MO} and Corollaries \ref{MW_CP2bu} and \ref{MW_CP1xCP1}) that the only case of Conjecture \ref{MooreWitten} that remains still open is the following:
\begin{conjecture}
\label{conjecture_light}
The contribution from the cusp at $\tau=\infty$ to the regularized $u$-plane integral 
of $\mathbb{C}\mathrm{P}^2$ for the gauge group $\mathrm{SU}(2)$
is equal to the generating function for the $\mathrm{SU}(2)$-Donaldson invariants of $\mathbb{C}\mathrm{P}^2$.
\end{conjecture}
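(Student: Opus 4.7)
My plan is to imitate the direct-integration strategy of the present paper, applied now to $\mathbb{C}\mathrm{P}^2$ with the $\mathrm{SU}(2)$ gauge group. Concretely: write down the Moore--Witten integrand on $\Gamma_0(4)\backslash\H$ for $X=\mathbb{C}\mathrm{P}^2$ with trivial Stiefel--Whitney class, regularize it by hand in the (unique) forward chamber of $H^2(\mathbb{C}\mathrm{P}^2,\mathbb{R})$, extract the contribution of the cusp at $\tau=\infty$, and identify the resulting $q$-series with the $\mathrm{SU}(2)$-Donaldson generating function of $\mathbb{C}\mathrm{P}^2$.

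The integrand is structurally simpler than in the $(1,1)$-signature case treated here: since $\mathbb{C}\mathrm{P}^2$ has intersection form $(+1)$, the Siegel--Narain lattice sum is a (shifted) one-dimensional \emph{definite} theta function of weight $3/2$, multiplied by the standard Seiberg--Witten modular building blocks on the $u$-plane. To isolate the cusp at $\tau=\infty$, I would follow the procedure of \cite{MooreWitten, MalmendierOno}: expand the integrand in $q$ and $\bar q$, subtract the polar terms that obstruct convergence of the $y^{-2}\,dx\,dy$ integral, and complete the resulting indefinite/mock pieces in the Zwegers sense to extract the cuspidal contribution in closed form. By analogy with the $\mathrm{SO}(3)$ computation in \cite{MalmendierOno}, I expect the output to be the holomorphic projection of a weight-$3/2$ mock modular form, multiplied by quasi-modular data from the low-energy effective action.

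The final, and hardest, step is to match this expression against an independent description of the $\mathrm{SU}(2)$-Donaldson series of $\mathbb{C}\mathrm{P}^2$. Since $\mathbb{C}\mathrm{P}^2$ is conjecturally \emph{not} of simple type, there is no finite Kronheimer--Mrowka basic-class formula to compare against, and one must reconstruct an infinite sequence of Donaldson polynomials from a single mock modular form. A natural route is via the Fintushel--Stern blow-up formula applied to $\mathbb{C}\mathrm{P}^2 \# \overline{\mathbb{C}\mathrm{P}^2}$, whose $\mathrm{SU}(2)$ invariants with the exceptional class as $w_2$ are in fact $\mathrm{SO}(3)$ invariants and are accessible by the methods of this paper combined with Ellingsrud--G\"{o}ttsche wall-crossing.

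The principal obstacle, in my view, is to organize this matching as a \emph{recursion in the instanton number} satisfied independently by both sides --- from modularity on one side, from wall-crossing/blow-up on the other --- so that the identity reduces to a finite check in low degree. Absent such a structural reduction, one is left comparing two non-trivial infinite series coefficient by coefficient, which is presumably why this case has resisted the arguments that dispatched its siblings.
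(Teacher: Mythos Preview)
There is no proof in the paper for your attempt to be compared against: the statement is labeled a \emph{conjecture}, and the text says explicitly that ``Conjecture~\ref{conjecture_light} remains open.'' The paper invokes it only as a standing hypothesis in parts (2) and (4) of Theorem~\ref{main_prop}; it never claims to establish it.

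Your write-up is not a proof either, and you appear to know this. What you have is a strategy sketch that correctly identifies the ingredients --- the weight-$3/2$ setup on $\mathbb{C}\mathrm{P}^2$ with $a=0$, $b=1$, the Zagier-type Maass form $Q_{01}$, and the blowup/wall-crossing route through $\widehat{\mathbb{C}\mathrm{P}^2}$ --- and then, in your final paragraph, names the actual obstruction: you have no mechanism forcing the two infinite series to agree beyond a finite check. That obstruction \emph{is} the open problem. Moore and Witten already verified agreement for the first 40 coefficients (see the discussion following Theorem~\ref{evaluation1}); what is missing is precisely the structural recursion or modularity identity you say you would need. Until that step is supplied, the proposal is a restatement of the difficulty rather than a resolution of it.
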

\noindent
For $\mathbb{C}\mathrm{P}^2$, the regularization procedure employed in the definition of the $u$-plane integral depends on interpreting the integrand as a total derivative, combined with constant term contributions from cusps. The integration by parts naturally introduces non-holomorphic modular forms of weight $3/2$ for the gauge group $\mathrm{SU}(2)$. As evidence for Conjecture \ref{conjecture_light}, Moore and Witten \cite{MooreWitten} computed the first 40 coefficients of the $u$-plane integral and found them to be in agreement with the Donaldson invariants previously determined by Ellingsrud and G\"ottsche \cite{EllingsrudGoettsche}. However, Conjecture \ref{conjecture_light} remains open. 
The same technique was later used in \cite{MalmendierOno} to evaluate the $u$-plane integral of $\mathbb{C}\mathrm{P}^2$ for the gauge group $\mathrm{SO}(3)$ using  non-holomorphic modular forms of weight $1/2$ and prove Conjecture \ref{MooreWitten} in this case.  This article is continuation of the work in \cite{MalmendierOno}, but for another four-manifold with $b_2^+=1$ which is not of simple type. More generally speaking, if the conjecture about the equivalence of the simple-type-condition
and $b_2^+>1$ is true, then the conjecture \ref{MooreWitten} implies that manifolds which are not of simple type (i.e., the ones for which the beautiful structure theorem of Kronheimer and Mrowka does not hold) have mock modular forms of half-integral weight as the generating functions for their Donaldson invariants instead.

At this point, we like to make an historic remark: mock theta functions first appeared in a letter of Ramanujan to Hardy in 1920. In his letter, Ramanujan listed several examples of functions that he called mock theta functions. These function have an asymptotic expansion at the cusps, similar to that of modular forms of weight $1/2$, possibly with poles at cusps, but cannot be expressed in terms of ordinary theta functions. The theory behind mock theta functions remained unclear until Sander Zwegers discovered their connection to harmonic weak Maass forms in 2001. We refer to the article \cite{Ono} and the references therein for a more detailed overview over the development and history of mock modular forms.
 
In modern terminology, a mock modular form is the holomorphic part of a harmonic weak Maass form, and a mock theta function is a mock modular form of weight $1/2$. A harmonic weak Maass form is a smooth complex-valued function on the upper half plane that transforms like a modular form of weight $k$, though it may not be holomorphic at cusps, and is harmonic with respect to the hyperbolic weight-$k$ Laplacian. It is also common to impose the condition that the Maass form grows at most exponentially fast at cusps which for mock modular forms means that they are meromorphic at cusps. It then follows from the definition that a mock modular form is holomorphic but not quite modular, while the harmonic weak Maass form is modular but not quite holomorphic. The space of mock modular forms of weight $k$ contains the space of modular forms that may be meromorphic at cusps of weight $k$ as a subspace. 

Moreover, since any harmonic weak Maass form of weight $k$ is annihilated by the
hyperbolic weight-$k$ Laplacian, the anti-holomorphic part of a harmonic weak
Maass form is in turn related to a holomorphic modular form of weight $2-k$ through a complex anti-linear, first order differential operator. In fact, this map constitutes an isomorphism between
the anti-holomorphic part of harmonic weak Maass forms and the space of holomorphic modular forms of weight $2 - k$. The weight-$(2 - k)$ modular form corresponding to a mock modular form is called its shadow \cite{ZagierBourbaki}.
Conversely, given a shadow the non-holomorphic part of a harmonic weak Maass form is
obtained through a period-integral from its shadow by inverting the differential operator.

In this article we will compute the $u$-plane integral 
for $\widehat{\mathbb{C}\mathrm{P}^2}=\mathbb{C}\mathrm{P}^2\# \,\overline{\mathbb{C}\mathrm{P}^2}$,
the blowup  of the complex projective plane in one point, and the Cartesian product $\mathbb{C}\mathrm{P}^1 \times \mathbb{C}\mathrm{P}^1$ directly in a chamber where the elliptic unfolding technique fails to work. 
This allows us to determine explicit formulas for their $\mathrm{SU}(2)$ and $\mathrm{SO}(3)$-Donaldson invariants
in terms of mock modular forms.
The generating functions for the Donaldson invariants can then all be expressed in terms of
a series of rational polynomials in a complex variable $\mu$. For $a,b \in \lbrace 0,1\rbrace$ and
$m,n \in \mathbb{N}_0$ we define
\begin{equation}
\label{coeff}
\widehat{D}^{\,ab}_{mn} = \sum_{k=0}^n  \left[ R^{\,ab}_{mnk} \; 
e^{-\mu^2 \, T} \; \frac{\vartheta_{ab}\left(\frac{\mu}{2\pi h} \Big| \tau \right)}{\vartheta_4(\tau)}
\; \mathcal{E}^k[Q_{ab}^+(\tau)] \right]_{q^0}
\end{equation}
where the modular functions $T$, $R^{\,ab}_{mnk}$, and $\mathcal{E}^k[Q_{ab}(\tau)]$ will be defined in (\ref{qforms}),
(\ref{R_mnk}), and (\ref{EkQab}) respectively. 
$Q_{ab}^+(\tau)$ is the holomorphic part of the harmonic Maass form of weight 3/2 and 1/2 whose shadow is $\vartheta_{ab}(0|\tau)$ and $\eta^3(\tau)$ respectively. (For the definition of the Jacobi theta functions and Dedekind eta function see Table \ref{JacobiTheta}).
Specifically, for $a=0$ or $b=0$, $Q_{ab}(\tau)$ is one of Zagier's weight 3/2 Maass-Eisenstein series \cite{Zagier}, and for $a=1$ and $b=1$, $Q_{ab}(\tau)$ is the weight 1/2 harmonic Maass form described using a non-holomorphic Jacobi form first constructed by Zwegers \cite{Z2}.

To state our results we need to label the generators of the homology group $H_2(X,\mathbb{Z})$
and its Poincar\'e duals. For $\mathbb{C}\mathrm{P}^1 \times \mathbb{C}\mathrm{P}^1$ we denote by $\mathrm{F}$ and 
$\mathrm{G}$ the Poincar\'e duals of the classes $\mathrm{f}, \mathrm{g}$ of the fibers for the projections onto its factors. It follows that $\mathrm{F}^2=\mathrm{G}^2=0$ and $(\mathrm{F},\mathrm{G})=1$.
Similarly, for $\widehat{\mathbb{C}\mathrm{P}^2}=\mathbb{C}\mathrm{P}^2 \# \,\overline{\mathbb{C}\mathrm{P}^2}$ we denote
by $\mathrm{H}$ the first Chern class of the dual of the hyperplane bundle over 
$\mathbb{C}\mathrm{P}^2$ and by $\mathrm{E}$ the class of the exceptional divisor on the blowup.
It follows that $\mathrm{H}^2=-\mathrm{E}^2=1$ and $(\mathrm{H},\mathrm{E})=0$.
We denote the Poincar\'e duals of $\mathrm{H}$ and $\mathrm{E}$ by $\mathrm{h}$ and $\mathrm{e}$
respectively.

\noindent
Our main result is the following:
\begin{theorem}
\label{main_prop}
\begin{enumerate}
\item[]
\item
On $X= \mathbb{C}\mathrm{P}^2\# \,\overline{\mathbb{C}\mathrm{P}^2}$
let $\omega =\mathrm{H} - \epsilon \, \mathrm{E}$ with $0< \epsilon \ll 1$ be the period point of the metric.  The generating function for the $\mathrm{SO}(3)$-Donaldson invariants 
in the variables $p \, \mathrm{x} \in H_0(X, \mathbb{Z})$ and $S=\kappa \,\mathrm{h} 
+ \mu \, \mathrm{e}\in H_2(X, \mathbb{Z})$ is
\begin{equation*}
 Z = \sum_{m,n \in \mathbb{N}_0} \dfrac{p^m}{m!} \dfrac{\kappa^{2n}}{(2n)!} \; \widehat{D}^{\,11}_{mn} \;.
\end{equation*}
\item Assuming Conjecture \ref{conjecture_light} the generating function for the Donaldson invariants of $\mathbb{C}\mathrm{P}^2\# \,\overline{\mathbb{C}\mathrm{P}^2}$ for the gauge group $\mathrm{SU}(2)$  is
\begin{equation*}
 Z = \sum_{m,n \in \mathbb{N}_0} \dfrac{p^m}{m!} \dfrac{\kappa^{2n+1}}{(2n+1)!} \; \widehat{D}^{\,01}_{mn} \;.
\end{equation*}
\item On $X=\mathbb{C}\mathrm{P}^1 \times \mathbb{C}\mathrm{P}^1$
let $\omega= \frac{1}{2} \,\mathrm{F} +  \mathrm{G}$ be the period point of the metric.
The generating function for the $\mathrm{SU}(2)$-Donaldson invariants 
in the variables $p \, \mathrm{x} \in H_0(X, \mathbb{Z})$ and
$S=\kappa_f \, \mathrm{f} + 2\, \kappa_g \, \mathrm{g} \in H_2(X, \mathbb{Z})$
is
\begin{equation*}
 Z_{\tau=\infty}
  =  \frac{1}{2} \, \sum_{a,b \,\in \lbrace 0,1\rbrace}
  \; \sum_{m,n \in \mathbb{N}_0} \dfrac{p^m}{m!} \;\dfrac{\kappa^{2n-ab+1}}{(2n-ab+1)!} \;  (-1)^{ab} \; \widehat{D}^{\,ab}_{mn} 
\end{equation*}
where $\kappa= \kappa_f + \kappa_g$ and $\mu=-\kappa_f + \kappa_g$.
\item Assuming Conjecture \ref{conjecture_light} the generating function for the Donaldson invariants of $\mathbb{C}\mathrm{P}^1 \times \mathbb{C}\mathrm{P}^1$ for the gauge group $\mathrm{SO}(3)$  is
\begin{equation*}
Z_{\tau=\infty}
  = \frac{1}{2} \, \sum_{a,b\,\in \lbrace 0,1\rbrace}
  \;  \sum_{m,n \in \mathbb{N}_0} \dfrac{p^m}{m!} \;  \dfrac{\kappa^{2n-ab+1}}{(2n-ab+1)!} \; (-1)^{(a+1)b} \; \widehat{D}^{\,ab}_{mn} \;.
\end{equation*}
\end{enumerate}
\end{theorem}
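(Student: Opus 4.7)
The plan is to reduce each part to a direct computation of the Moore-Witten regularized $u$-plane integral in the prescribed chamber, and to identify its contribution at the cusp $\tau=\infty$ with the Donaldson generating function via Conjecture \ref{MooreWitten}. For parts (1) and (3) this conjecture has been established in the relevant cases (combining \cite{MooreWitten, MalmendierOno} with the blow-up comparison cited above), while parts (2) and (4) require additionally Conjecture \ref{conjecture_light}. To set up the integrand I would fix $\omega$ as specified -- $\mathrm{H}-\epsilon\,\mathrm{E}$ for the blowup and $\tfrac{1}{2}\mathrm{F}+\mathrm{G}$ for the product -- and write down the $u$-plane integrand $\Psi^{ab}(\tau,\bar\tau;p,\kappa,\mu)$ determined by the intersection form of $X$, by the characteristic $(a,b)\in\{0,1\}^2$ encoding the gauge group and second Stiefel-Whitney class, and by the formal variables $p,\kappa,\mu$ dual to $\mathrm{x}\in H_0(X,\mathbb{Z})$ and to $S\in H_2(X,\mathbb{Z})$.

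The central analytic step is to recognize the $\bar\tau$-dependence of the integrand as a total antiholomorphic derivative. The Siegel-Narain theta attached to $H^2(X;\mathbb{R})$, multiplied by $\vartheta_{ab}(\mu/(2\pi h)\,|\,\tau)/\vartheta_4(\tau)\cdot e^{-\mu^2 T}$, admits an antiderivative that is a harmonic weak Maass form of weight $3/2$ when $ab=0$ (one of Zagier's Eisenstein series) and of weight $1/2$ when $ab=1$ (a Zwegers form), with shadow $\vartheta_{ab}(0|\tau)$ respectively $\eta^3(\tau)$. The holomorphic part of this antiderivative is precisely $Q^+_{ab}(\tau)$, and the additional lattice-sum insertions produced by differentiating in $p$ and in the $H^2$-variable yield the operator chain $\mathcal{E}^k[Q^+_{ab}]$ of \eqref{coeff}. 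Applying Stokes' theorem reduces the integral over $\Gamma_0(4)\backslash\mathbb{H}$ to contributions from its three cusps, and isolating the one at $\tau=\infty$ is exactly the $[\,\cdot\,]_{q^0}$ projection in the definition of $\widehat{D}^{ab}_{mn}$.

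For $\widehat{\mathbb{C}\mathrm{P}^2}$ only a single characteristic contributes in each chamber -- $(1,1)$ for $\mathrm{SO}(3)$ in (1) and $(0,1)$ for $\mathrm{SU}(2)$ in (2) -- so the result is a single double sum in $m,n$. For $\mathbb{C}\mathrm{P}^1\times\mathbb{C}\mathrm{P}^1$ the period point $\tfrac{1}{2}\mathrm{F}+\mathrm{G}$ sits on the boundary of the elliptic-unfolding region, and all four characteristics $(a,b)\in\{0,1\}^2$ enter additively; the relative signs $(-1)^{ab}$ (for $\mathrm{SU}(2)$) or $(-1)^{(a+1)b}$ (for $\mathrm{SO}(3)$) come from the theta decomposition of the integrand under the change of Stiefel-Whitney twist, while the substitution $\kappa=\kappa_f+\kappa_g$, $\mu=-\kappa_f+\kappa_g$ diagonalizes the associated bilinear form. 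Expanding the $\vartheta_{ab}$ factor in $\mu$ against $e^{-\mu^2 T}$ then produces the rational polynomial coefficients $R^{ab}_{mnk}$ and the shifted factorials $(2n-ab+1)!$ in parts (3) and (4).

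The main obstacle is the absence of elliptic unfolding in these chambers: one has to justify the formal expansion in $p,\kappa,\mu$ and the Stokes reduction directly, and verify that no wall-crossing correction is overlooked in moving $\omega$ to the prescribed period point. A secondary difficulty is cleanly separating the $\tau=0$ and $\tau=2$ cusp contributions -- which are of Seiberg-Witten type and vanish for these $b_2^+=1$ manifolds in the chosen chambers -- from the $\tau=\infty$ contribution, so that only the mock-modular piece $\widehat{D}^{ab}_{mn}$ remains. Once these analytic points are controlled, the assembly of the four statements becomes bookkeeping of characteristic-dependent signs and the coefficient identifications above.
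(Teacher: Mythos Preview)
Your outline for parts (1) and (2) matches the paper: the $u$-plane integrand on $\widehat{\mathbb{C}\mathrm{P}^2}$ is written as a total $\bar\tau$-derivative via the Maass forms $Q_{ab}$, Stokes reduces to the $q^0$ coefficient, and the identification with Donaldson invariants comes from Corollary~\ref{MW_CP2bu} (unconditionally for $\mathrm{SO}(3)$) or from Conjecture~\ref{conjecture_light} plus the shared blowup and wall-crossing formulas (for $\mathrm{SU}(2)$).

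For part (3) you have the right idea but are vague at the decisive point. The paper does \emph{not} evaluate the $\mathbb{C}\mathrm{P}^1\times\mathbb{C}\mathrm{P}^1$ integral from scratch; it reduces it to the $\widehat{\mathbb{C}\mathrm{P}^2}$ computation already done. Concretely, with $\lambda_0=0$, $w_2=0$ and $\omega=\tfrac12\mathrm{F}+\mathrm{G}$, the lattice sum over $\lambda=M_1\mathrm{F}+M_2\mathrm{G}$ is rewritten via $M_1=(N_1-N_2)/2$, $M_2=N_1+N_2+a$, and the parity constraint is enforced by inserting the $\mathbb{Z}_2$-delta function $\tfrac12\sum_{b\in\{0,1\}}e^{\pi i(N_1-N_2)b}$. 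This yields $\bar\Theta=\tfrac12\sum_{a,b}(-1)^{ab}\,\widehat{\bar\Theta}_{ab}$, where $\widehat{\bar\Theta}_{ab}$ is exactly the Siegel--Narain theta for $\widehat{\mathbb{C}\mathrm{P}^2}$ with $2\widehat\lambda_0=a(\mathrm{H}-\mathrm{E})$, $\widehat w_2=b(-\mathrm{H}+\mathrm{E})$, $\widehat\omega=\mathrm{H}-\epsilon\mathrm{E}$, and $\widehat S=\kappa\,\mathrm{h}+\mu\,\mathrm{e}$ with your substitution $\kappa=\kappa_f+\kappa_g$, $\mu=-\kappa_f+\kappa_g$. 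Applying Corollary~\ref{evaluation2} termwise gives (\ref{generating_function2}). This ``summing over spin structures'' is the content you gesture at with ``theta decomposition under the change of Stiefel--Whitney twist,'' but it is the crux and should be stated explicitly.

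For part (4) your plan diverges from the paper and leaves a gap. You propose to compute the $\mathrm{SO}(3)$ integral on $\mathbb{C}\mathrm{P}^1\times\mathbb{C}\mathrm{P}^1$ directly by the same mechanism; but with $2\lambda_0=\mathrm{F}$ the half-integral shift alters the parity structure of the lattice change above, and you do not indicate how the four characteristics and the sign $(-1)^{(a+1)b}$ would emerge. The paper instead invokes the G\"ottsche--Zagier relation (Lemma immediately preceding the corollary in \S4.4), which expresses the $\mathrm{SO}(3)$ generating function on $\mathbb{C}\mathrm{P}^1\times\mathbb{C}\mathrm{P}^1$ as the $\mathrm{SU}(2)$ one on $\mathbb{C}\mathrm{P}^1\times\mathbb{C}\mathrm{P}^1$ plus a difference of $\widehat{\mathbb{C}\mathrm{P}^2}$ generating functions. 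The $\mathrm{SU}(2)/\widehat{\mathbb{C}\mathrm{P}^2}$ term is precisely what forces Conjecture~\ref{conjecture_light} into part (4). If your direct computation actually went through, part (4) would be \emph{unconditional} --- contradicting the statement you are trying to prove --- so either you must supply the missing $\mathrm{SO}(3)$ lattice argument (and then strengthen the theorem), or follow the paper and route through G\"ottsche--Zagier.
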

As stated in Part (1) and (2) of Theorem \ref{main_prop} the Donaldson invariants of the blowup
$X=\mathbb{C}\mathrm{P}^2\# \,\overline{\mathbb{C}\mathrm{P}^2}$ only make use of the coefficients
(\ref{coeff}) for $b=1$. As we shall see the reason is that $X$ is not a
spin manifold. The original motivation for this article was to answer the question whether
the coefficients (\ref{coeff}) have a geometric meaning in the case $b=0$ as well.
Parts~(3) and (4) of Theorem \ref{main_prop} give a positive answer to this question.
This means that Zagier's weight 3/2 Maass-Eisenstein series $Q_{ab}(\tau)$ for $b=0$ 
used in Equation~(\ref{coeff}) are the generating functions for certain Donaldson invariants.
The key technique in the proof of Theorem \ref{main_prop} is what is known in string theory
as `summing over all spin structures of the torus'.

This article is structured as follows.
In Section~\ref{donaldson} we recall the definition and basic properties of the Donaldson invariants. In Section~\ref{uplane} we define the $u$-plane integral and explain two of its fundamental properties, the 
so-called wall-crossing and blowup formulas. In Section~\ref{complex_surfaces} we evaluate the $u$-plane 
integral for the simplest complex surfaces: the complex projective plane $\mathbb{C}\mathrm{P}^2$, the blowup  of the complex projective plane in one point $\mathbb{C}\mathrm{P}^2 \# \,\overline{\mathbb{C}\mathrm{P}^2}$,  and the Cartesian product $\mathbb{C}\mathrm{P}^1 \times \mathbb{C}\mathrm{P}^1$. To carry out this computation, we employ the theory of harmonic Maass forms. 
Specifically, we relate the relevant generating functions  for the $\mathrm{SU}(2)$-gauge theory and the $\mathrm{SO}(3)$-gauge theory to the holomorphic parts of harmonic Maass forms of weight 3/2 and 1/2
respectively. In Section \ref{proof} we combine these results to prove Theorem \ref{main_prop}.
Throughout the article we will assume that the reader is familiar with the results in \cite{MalmendierOno}.
More details in terms of the relevant background in number theory and differential geometry can also be found there.

\medskip

\noindent
{\small
{\bf Notation:} In this article we will use the following definition for the Jacobi theta function 
\begin{equation}
 \vartheta_{ab}(v|\tau) = \sum_{n\in\mathbb{Z}} q^{\frac{(2n+a)^2}{8}} \; e^{\pi i \,(2n+a)(v+\frac{b}{2})}
\end{equation}
where $a,b \in \lbrace 0,1\rbrace$, $v\in\mathbb{C}$, $q=\exp(2\pi i \tau)$,
$\tau = x+iy \in \mathbb{H}$, and $\mathbb{H}$ is the complex upper half-plane.
The relation to the standard Jacobi theta functions is summarized in the following table:
\begin{equation}
\label{JacobiTheta}
 \begin{array}{l|l|l}
  \vartheta_1(v|\tau) = \vartheta_{11}(v|\tau)
& \vartheta_1(0|\tau) = 0
& \vartheta_1'(0|\tau) = - 2\pi \eta^3(\tau) \\ [1ex]
\hline && \\[-2ex]
  \vartheta_2(v|\tau) = \vartheta_{10}(v|\tau) 
& \vartheta_2(0|\tau) = \sum_{n\in\mathbb{Z}} q^{\frac{(2n+1)^2}{8}} 
& \vartheta_2'(0|\tau) = 0 \\ [1ex]
\hline && \\[-2ex]
  \vartheta_3(v|\tau) = \vartheta_{00}(v|\tau)
& \vartheta_3(0|\tau) = \sum_{n\in\mathbb{Z}} q^{\frac{n^2}{2}}
& \vartheta_3'(0|\tau) = 0 \\ [1ex]
\hline && \\[-2ex]
  \vartheta_4(v|\tau) = \vartheta_{01}(v|\tau)
& \vartheta_4(0|\tau) = \sum_{n\in\mathbb{Z}} (-1)^n \, q^{\frac{n^2}{2}}
& \vartheta_4'(0|\tau) = 0
\end{array}
\end{equation}
Here $\eta(\tau)$ is the Dedekind eta function with
\begin{equation}
 \eta^3(\tau) = \sum_{n=0}^\infty (-1)^n \; (2n+1) \; q^{\frac{(2n+1)^2}{8}} \;.
\end{equation}
We will also use the notation $\vartheta_j=\vartheta_j(0|\tau)$ for $j=2,3,4$.}

\section{Donaldson theory of simply connected four-manifolds}
\label{donaldson}

The Donaldson invariants of a smooth, compact, oriented, simply connected Riemannian four-manifold
$(X,g)$ without boundary are defined by using intersection theory on the moduli space of 
anti-self-dual instantons for the gauge group $\mathrm{SU}(2)$ or $\mathrm{SO}(3)$ respectively \cite{Goettsche2}.
Given a homology orientation some cohomology classes on the instanton moduli space can be associated to homology classes of $X$ through the slant
product and then evaluated on a fundamental class. Define $\mathbf{A}(X)={\rm Sym}(H_0(X,\mathbb{Z}) \oplus H_2(X,\mathbb{Z}))$ and
regard the Donaldson invariants as the functional
\begin{equation}
  \mathcal{D}_{w_2(E)}^{X,g}: \mathbf{A}(X) \rightarrow \mathbb{Q} \;,
\end{equation}
where $w_2(E) \in H^2(X, \mathbb{Z}_2)$ is the second Stiefel-Whitney class
of the gauge bundles which are considered. 
Since $X$ is simply connected there is an integer class
$2\lambda_0 \in H^2(\mathbb{C}\mathrm{P}^2,\mathbb{Z})$ that is not divisible by two and
whose mod-two reduction is $w_2(E)$.
Let $\{\mathrm{s}_i\}_{i=1,\ldots, b_2}$ be a basis of the two-cycles of $X$. We introduce
the formal sum $S=\sum_{i=1}^{b_2} \kappa^i \,\mathrm{s}_i$ where  $\kappa^i$ are complex
numbers. The generator of the zero-class of $X$ will be denoted by $\mathrm{x} \in H_0(X, \mathbb{Z})$. 
The Donaldson-Witten generating function is
\begin{equation}
\label{donwi}
 Z_{DW}(p, \kappa)=\mathcal{D}^{X,g}_{w_2(E)}(e^{p \, \mathrm{x} + S}) \;,
\end{equation}
so that the Donaldson invariants are read off from the expansion 
of (\ref{donwi}) as the coefficients of powers of $p$ and $\kappa=(\kappa^1, \dots, \kappa^{b_2})$. 

The Donaldson invariants are topological invariants of $X$ and do not depend on the metric $g$ if $b_2^+ >1$.
For $b_2^+=1$ the Donaldson invariants are no longer independent of the metric \cite{KotschickMorgan}.
A metric $g$ on $X$ determines a ray within the set of self-dual (with respect
to $g$) harmonic two-forms $H^2(X,\mathbb{R})^+=\{ \alpha \in H^2(X,\mathbb{R}) | \, \alpha^2>0\}$.
The choice of an homology orientation amounts to choosing a connected component of
$H^2(X,\mathbb{R})^+/\mathbb{R}^+$. A representative for such a ray is given by a normalized self-dual two-form (or period point) $\omega$ with $\omega^2=1$.
We will always assume that a chosen period point is located in the component determined by the homology orientation.
The generating function (\ref{donwi}) for the Donaldson invariants of a manifold $X$ with $b_2^+=1$ depends on the metric 
through the position of the period point $\omega$ in $H^2(X,\mathbb{R})^+$ via a systems of walls and chambers. In fact, the generating function (\ref{donwi}) has a discontinuous variation in $\omega$
if a cohomology class $\lambda\in H^{2}(X,\mathbb{Z})+\lambda_0$ is such that the period
$\omega\cdot\lambda$ changes its sign. We then say that $\lambda$ defines a wall. The chambers are 
the complements of these walls.

\section{The $u$-plane integral}
\label{uplane}
From now on we will assume that $(X,g)$ is a smooth, compact, oriented, simply connected Riemannian four-manifold without boundary
and $b_2^+=1$. The $u$-plane integral $Z$ is a generating function in the variables $p$ and $\kappa$
whose coefficients are the integrals 
of certain modular forms over the fundamental domain of the group $\Gamma_{0}(4)$ and depend on the period point $\omega$, the lattice $H_2(X,\mathbb{Z})$ together with the intersection form $(.\, ,.)$, the second Stiefel-Whitney classes
of the gauge bundle $w_2(E)$ and the tangent bundle $w_2(X)$ whose integral liftings are denoted by $2\lambda_0$ and $w_2$ respectively. The $u$-plane integral is non-vanishing only for manifolds with $b_2^{+}=1$.
The explicit form of $Z$ for simply connected four-manifolds was first introduced in \cite{MooreWitten}.  
For the convenience of the reader we quickly review the explicit construction of the $u$-plane in this
chapter. Our approach to the $u$-plane integral, as well as its normalization follows closely the approach in \cite{LabastidaLozano,LozanoMarino,MarinoMoore}.

\noindent
We will denote the self-dual and anti-self-dual projections of any two-form $\lambda\in H^2(X,\,\mathbb{Z})+\lambda_0$
by $\lambda_{+}=(\lambda,\omega)\omega$ and $\lambda_{-}=\lambda-\lambda_{+}$ respectively.
We first introduce the integral
\begin{equation}
\label{newcpione}
\mathcal{G}(\rho)= \int_{\Gamma_0(4) \backslash \mathbb{H}}^{\text{reg}}
\dfrac{dx dy}{y^{\frac{3}{2}}} \; \widehat f (p,\kappa) \; \bar\Theta(\xi) \;.
\end{equation}
In this expression $\widehat f(p,\kappa)$ is the almost holomorphic
modular form given by
\begin{equation}
\label{nwcnvpione}
\widehat f(p,\kappa)  =
\dfrac{\sqrt{2}}{64 \pi } \dfrac{\vartheta_4^{\sigma}}{h^3 \cdot f_{2}} \; 
 e^{2 \, p \, u + S^2 {\widehat T}}
\end{equation} 
where $\sigma$ is the signature of $X$ and $S^2=(S,S)=\sum_{i,j} \kappa^i \kappa^j (\mathrm{s}_i,\mathrm{s}_j)$. $\bar{\Theta}$ is the Siegel-Narain theta function and is defined to be
\begin{equation}
\label{siegnar}
\begin{split}
\bar \Theta(\xi)\,   = \, &\exp\left\lbrack \frac{\pi}{2\,y} \Big(\bar{\xi}_{+}^2-\bar{\xi}_{-}^2\Big)\right\rbrack \\
\times  \sum_{\lambda\in H^2+ \lambda_0 }
& \exp\Big\lbrack - i \pi \bar\tau (\lambda_+)^2 - i \pi   \tau(\lambda_-  )^2
- 2 \pi i\, (\lambda,\bar\xi) + \pi i\, (\lambda ,  w_2)\Big\rbrack 
\end{split}
\end{equation}
where $\bar{\xi}= \bar{\xi}_+ + \bar{\xi}_-$, $\bar{\xi}_+=\rho \, y \, h \, \omega$, 
$\bar{\xi}_-= S_-/(2\pi h)$, and $\rho \in \mathbb{R}$.
The Siegel-Narain theta function only depends on the lattice data $(H^2(X), \omega, \lambda_0, w_2)$.
We have denoted the intersection form in two-cohomology by $(.\, , .)$ and
used Poincar\'e duality to convert cohomology classes into homology classes.
In the above expressions $u$, $T$, $h$, and $f_{2}$
are the modular forms defined as follows:
\begin{equation}
\label{qforms}
\begin{array}{rclcrcl}
u&=&  \dfrac{\vartheta_2^4+\vartheta_3^4}{2 \, (\vartheta_2\vartheta_3)^2} \;, &\qquad&
h&=& \frac{1}{2} \, \vartheta_2 \, \vartheta_3 \;, \\ [2ex]
T&=&-\dfrac{1}{24} \left(  \dfrac{E_2}{h^2} - 8 \, u\right) \;, &&
f_{2}&=&\dfrac{\vartheta_2 \, \vartheta_3}{2 \, \vartheta_4^8} \;.
\end{array}
\end{equation}
$T$ does not transform well under modular transformations, due to the presence of the second normalized Eisenstein series $E_2=E_2(\tau)$ with
\begin{displaymath}
E_2(\tau)=1-24\sum_{n=1}^{\infty}\sum_{d\mid n}d \cdot q^n \;.
\end{displaymath}
Therefore, in Equation (\ref{nwcnvpione})
we have used the related form $\widehat{T} = T + 1/(8\pi y h^{2})$
which is not holomorphic but transforms well under modular transformations.
We also define the related holomorphic function $f(p, \kappa)$
as in Equation (\ref{nwcnvpione}), but with $T$ instead of $\widehat{
T}$. The $u$-plane integral is defined to be
\begin{equation}
\label{u-plane_integral}
 Z\Big(X,\omega,\lambda_0,w_2\Big) =\left. \left\lbrack (S,\omega) + 2  \dfrac{d}{d\rho} \right\rbrack\right|_{\rho=0} \; \mathcal{G}(\rho)\;.
\end{equation}
If there is no danger of confusion we suppress the arguments $(X,\omega,\lambda_0,w_2)$ of $Z$.
\begin{remark}
The definition of the $u$-plane integral in Equation (\ref{u-plane_integral}) agrees with the definition given
in \cite{LozanoMarino}. However, compared to the original definition in \cite{MooreWitten} a factor of 
$\exp{[2 \pi i (\lambda_0,\lambda_0) + \pi i (\lambda_0, w_2)]}$ is missing. For all cases considered in this
article this factor is equal to one. 
\end{remark}
The regularization procedure applied in the definition of the integral 
(\ref{newcpione}) was described in detail in \cite{MooreWitten}. It defines 
a way of extracting certain contributions for each boundary component near the cusps of $\Gamma_0(4)\backslash\mathbb{H}$. 
Since the cusps are located at $\tau=\infty$, $\tau=0$, and $\tau=2$ we obtain
$Z$ as the sum of these contributions from the cusps:
\begin{equation}
 Z = Z_{\tau=0} + Z_{\tau=2} + Z_{\tau=\infty} \;.
\end{equation}
For the complex surfaces considered in this article we will show in Theorem \ref{evaluation1} and
Corollary \ref{evaluation2} that the regularization procedure in the integral amounts to 
computing the constant coefficient term in the series expansion of the integrand (\ref{newcpione}).

\subsection{Wall-crossing for the $u$-plane integral}
\label{wall_crossing}
\noindent
The integral (\ref{newcpione}) has a discontinuous variation in $\omega$ at the cusps 
of $\Gamma_0(4) \backslash \mathbb{H}$ if for the cohomology class $\lambda\in H^{2}(X,\mathbb{Z})+\lambda_0$ the period $\omega\cdot\lambda$ changes sign. 
The conditions for wall-crossing are $\lambda^2<0$ and $\lambda_{+}=0$.
The wall-crossing of the $u$-plane integral associated with the cusp at infinity $\tau= \infty$ was first derived 
in Section $4$ of \cite{MooreWitten}:
\begin{theorem}[Wall-crossing]
Let
\begin{equation}
\label{Z_diff}
 Z_{\tau=\infty}(X,\omega_1,\lambda_0, w_2) - Z_{\tau=\infty}(X,\omega_2,\lambda_0, w_2) 
 = \sum_{\lambda} \; WC(\lambda)
\end{equation}
be the difference between the cusp contribution at $\tau=\infty$ for the period points $\omega_1$ and $\omega_2$.
The sum is understood to run over all $\lambda \in H^2(X,\mathbb{Z})+\lambda_0$ with $\omega_1\cdot\lambda>0>\omega_2\cdot\lambda$.
We then have
\begin{equation}
\label{wcross}
\begin{split}
WC(\lambda)
& = -\frac{i}{2} (-1)^{ (\lambda-\lambda_0,   w_2)} \; e^{2\pi i\lambda_0^2} \\
&\times \left\lbrack q^{- \frac{\lambda^2}{2}} \, \dfrac{\vartheta_4^\sigma}{h^2 \cdot f_{2}} \;
\exp\Big\lbrack 2\,p\, u  +  S^2 \, T -  \frac{i}{h} (\lambda , S) \Big\rbrack \right\rbrack_{q^0} \;
\end{split}
\end{equation}
where $u, h, T, f_2$ were defined in (\ref{qforms}) and $p \, \mathrm{x} \in H_0(X, \mathbb{Z})$, 
$S \in H_2(X, \mathbb{Z})$. 
\end{theorem}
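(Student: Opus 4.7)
The plan is to deduce the formula by studying how $Z_{\tau=\infty}$ varies along a continuous path of period points. Choose a smooth path $\omega(t)\in H^2(X,\mathbb{R})^+$, $t\in[0,1]$, from $\omega_1$ to $\omega_2$. For each $\lambda\in H^2(X,\mathbb{Z})+\lambda_0$ with $\omega_1\cdot\lambda>0>\omega_2\cdot\lambda$ there is a unique $t_\lambda\in(0,1)$ at which the wall $\omega(t_\lambda)\cdot\lambda=0$ is crossed. The goal is to show that $Z_{\tau=\infty}(\omega(t))$ is continuous in $t$ away from the $t_\lambda$'s and undergoes a finite jump at each $t_\lambda$ equal to $WC(\lambda)$. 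Only the Siegel-Narain theta function $\bar\Theta(\xi)$ in (\ref{siegnar}) depends on $\omega$, so the entire variation is traceable to $\bar\Theta$.

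Next, decompose $H^2(X,\mathbb{R})=\mathbb{R}\omega\oplus\omega^\perp$ to split each summand of $\bar\Theta$ into a Gaussian in $\omega\cdot\lambda$ weighted by $\exp(-\pi y(\omega\cdot\lambda)^2)$ and a theta-like contribution along $\omega^\perp$. The standard Borcherds/Fay-type identity lets one rewrite the $t$-derivative of the integrand, acting on the Gaussian factor, as a total $\bar\tau$-derivative up to terms that vanish after integration over the fundamental domain by modular invariance. Integration then collapses the bulk contribution, and the only surviving piece is the boundary term at the cusp $\tau=\infty$. By the regularization prescription this boundary term extracts exactly the constant Fourier coefficient $[\,\cdot\,]_{q^0}$ of the relevant expression.

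The explicit evaluation proceeds term by term. For a fixed wall-crossing $\lambda$, isolate its summand in $\bar\Theta$, which is a phase $\exp[\pi i(\lambda,w_2)]$ times
\begin{equation*}
\exp\Bigl[-i\pi\bar\tau\lambda_+^2 - i\pi\tau\lambda_-^2 - 2\pi i(\lambda,\bar\xi)\Bigr].
\end{equation*}
At the jump $\lambda_+=0$, so $\lambda_-^2=\lambda^2$ and the $\tau$-phase reduces to $q^{-\lambda^2/2}$. The Gaussian integral in $y$ of the $\bar\xi_+=\rho y h\omega$ piece, combined with the prescription $[(S,\omega)+2d/d\rho]_{\rho=0}$ in (\ref{u-plane_integral}), converts the prefactor $\sqrt{2}/(64\pi)$ of $\widehat f$ into $-i/2$ and lowers $h^3$ to $h^2$ in the denominator; it also turns $\widehat T$ into $T$ because the $1/(8\pi yh^2)$ correction cancels against the $1/y$ produced by the Gaussian. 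The term $-2\pi i(\lambda,\bar\xi_-)$ with $\bar\xi_-=S_-/(2\pi h)$ yields the linear insertion $-(i/h)(\lambda,S)$ (using $(\lambda,\omega)=0$ on the wall so that $(\lambda,S_-)=(\lambda,S)$). The sign $(-1)^{(\lambda-\lambda_0,w_2)}$ comes from the $\pi i(\lambda,w_2)$ factor together with the conventional shift of summation variable from $\lambda$ to $\lambda-\lambda_0\in H^2(X,\mathbb{Z})$, and the global $e^{2\pi i\lambda_0^2}$ records the phase convention noted in the Remark after (\ref{u-plane_integral}).

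The most delicate step will be the cancellation between the anti-holomorphic Gaussian in $\bar\Theta$, the non-holomorphic correction $1/(8\pi y h^2)$ hidden in $\widehat T$, and the explicit $\rho y h\,\omega$ in $\bar\xi_+$: these three non-holomorphic pieces must conspire, after the Gaussian $y$-integration, to produce a purely holomorphic $q^0$-coefficient depending only on $T$. This is the same mechanism that underlies the modular invariance of $\mathcal{G}(\rho)$ in (\ref{newcpione}), but here it must be tracked with the extra $\lambda$-dependent insertions and phase factors. Carrying out this bookkeeping cleanly, in particular verifying that no $y$-dependence leaks into the final expression, is the main technical obstacle.
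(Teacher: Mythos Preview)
Your outline follows the Moore--Witten derivation that the paper simply cites: the paper's own ``proof'' does not derive the formula at all but refers to \cite[Section~4]{MooreWitten} and only adds the remark that the $q$-expansion forces $WC(\lambda)=0$ unless $0>\lambda^2\ge p_1/4$. So you are attempting more than the paper does, along the same standard route.

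However, your description of the mechanism is off in two places. First, one does \emph{not} rewrite the $t$-derivative of the integrand as a total $\bar\tau$-derivative; one rewrites the integrand of $Z$ itself (after applying $[(S,\omega)+2\,d/d\rho]_{\rho=0}$, which inserts a factor proportional to $(\lambda,\omega)$ into each summand of $\bar\Theta$) as a total $\bar\tau$-derivative. The relevant identity is, schematically,
\[
y^{-1/2}\,(\lambda,\omega)\,e^{-2\pi y(\lambda,\omega)^2}
\;\propto\;
\partial_{\bar\tau}\,\Big[\operatorname{sign}(\lambda,\omega)\,\operatorname{erfc}\!\big(\sqrt{2\pi y}\,|(\lambda,\omega)|\big)\Big],
\]
up to holomorphic factors. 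Second, there is no ``Gaussian integral in $y$'': after integration by parts the only contribution is the boundary value at $y\to\infty$, where the complementary error function tends to $0$ and the antiderivative reduces to $\operatorname{sign}((\lambda,\omega))$. It is precisely this sign that jumps by $2$ as $\omega$ crosses the wall $\lambda^\perp$, and that discrete jump is the entire wall-crossing contribution $WC(\lambda)$. The passage $\widehat T\to T$ happens simply because in the surviving boundary term the $1/(8\pi y h^2)$ correction vanishes as $y\to\infty$; it is not a cancellation against a Gaussian $y$-integral. Once you replace your ``$t$-derivative'' and ``Gaussian integral'' language with this sign-function mechanism, the rest of your bookkeeping (the phases $(-1)^{(\lambda-\lambda_0,w_2)}e^{2\pi i\lambda_0^2}$, the factor $q^{-\lambda^2/2}$ from $\lambda_-^2=\lambda^2$ on the wall, and the insertion $-i(\lambda,S)/h$ from $\bar\xi_-$) is correct.
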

\begin{proof}
Using the $q$-expansion of the different modular forms, it is
easy to check that the wall-crossing term is different from zero only if
$0>\lambda^2\ge p_1/4$ where $p_1$ is the Pontryagin number of the gauge
bundle (and $p_1 \equiv w_2(E)^2$ mod $4$). By combining the contributions (\ref{wcross}) 
for all crossed walls one finds that the difference between the cusp contribution at $\tau=\infty$ for $\omega_1$ and $\omega_2$ is given by Equation (\ref{Z_diff}) and (\ref{wcross}).
\end{proof}

\begin{remark}
The expression (\ref{wcross}) agrees with the wall-crossing formula for the Donaldson invariants of
non-simply connected manifolds with $b_2^+=1$ derived in \cite{Goettsche} under the assumption of the Kotschick-Morgan conjecture.
The equality of the two wall-crossing formulas means that for manifolds with $b_2^+=1$ and $b_2^-\ge 1$ the Moore-Witten conjecture can be proved effectively by showing that
the generating function for the Donaldson invariants and the $u$-plane integral agree in one particular chamber.
\end{remark}

\subsection{Blowup formulas for the $u$-plane integral}
\label{blow_up}
The blowup formula relates the $u$-plane integral of a four-manifold $X$ with $b_2^+=1$ to the 
$u$-plane integral of the connected sum $\widehat{X}=X \# \,\overline{\mathbb{C}\mathrm{P}^2}$ of $X$ and $\mathbb{C}\mathrm{P}^2$ with the opposite orientation. In fact, the blowup formula expresses the coefficients of the generating function of $\widehat{X}$ in terms of the coefficients
of the generating function of $X$.

Let $\mathrm{E}$ be the class of the exceptional divisor such that $H^2(\widehat{X},\mathbb{R})=H^2(X,\mathbb{R}) \oplus \mathbb{R} \,
\mathrm{E}$ and $H^2(X,\mathbb{R})$ is identified with the classes in $H^2(\widehat{X},\mathbb{R})$ orthogonal to $\mathrm{E}$.
Let $\mathrm{e}$ be the Poincar\'e dual of $\mathrm{E}$. We need to choose the metric on $\widehat{X}$ to be very close to the 
pullback of the metric on $X$ to make the computation of Moore and Witten applicable.
Thus, if $\omega \in H^2(X,\mathbb{R})^+$ denotes the period point of $X$ we choose the period point $\omega+ := \omega - \epsilon \, \mathrm{E}$ with $0<\epsilon \ll 1$ for $\widehat{X}$. We also denote the integral lifting of $w_2(\widehat{X})$ by $\widehat{w}_2$.
The cycles $\{\mathrm{s}_1, \dots, \mathrm{s}_{b_2}, \mathrm{e}\}$ form a basis of the two-cycles of $\widehat{X}$.
Thus, the formal sum $\widehat{S}=\sum_{i=1}^{b_2} \kappa^i \,\mathrm{s}_i + \mu \, \mathrm{e}$ will be appearing in the $u$-plane integral where $\mu$ is a complex variable. The result of \cite[Section 6]{MooreWitten} can now be stated as follows:
\begin{theorem}[Blowup formula for $\mathrm{SO}(3)$]
\label{blowupSO3}
For $p \, \mathrm{x} \in H_0(\widehat{X}, \mathbb{Z})$ and $\widehat{S}=S + \mu \, \mathrm{e} \in H_2(\widehat{X},\mathbb{Z})$ we have that
\begin{equation}
\label{blowup_SO3}
 Z \Big( \widehat{X}, \omega+, \widehat{\lambda}_0 = \lambda_0 + \frac{1}{2} \, \mathrm{E}, \widehat{w}_2 =w_2 + \mathrm{E} \Big) = 
 \left. \left\lbrack (S,\omega) + 2  \dfrac{d}{d\rho} \right\rbrack\right|_{\rho=0} \; \widehat{\mathcal{G}}_0(\rho)\;,
\end{equation}
where
\begin{equation}
\label{newcpione_0}
\widehat{\mathcal{G}}_0(\rho)= \int_{\Gamma_0(4) \backslash \mathbb{H}}^{\text{reg}}
\dfrac{dx dy}{y^{\frac{3}{2}}} \; \; \widehat f (p,\kappa) \; \bar\Theta(\xi) \; \; \; \frac{1}{2\sqrt{2}} \, e^{-\frac{u\, \mu^2}{3}} \, \sigma(2\sqrt{2} \,\mu) \;.
\end{equation}
$\sigma(2\sqrt{2} \,\mu)$ is the Weierstrass sigma function for the periods $2\,\underline{\omega}=4\sqrt{2}\pi h$ and $2\,\underline{\omega}'=\tau \, 4\sqrt{2}\pi h$. 
$\hat{f}$ and $\bar{\Theta}$ depend on $X$ only and were defined in (\ref{nwcnvpione}) and
(\ref{siegnar}) respectively. The quantities $u$ and $h$ were defined in (\ref{qforms}).
\end{theorem}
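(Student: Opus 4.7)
The plan is to prove (\ref{blowup_SO3}) by computing the ratio of the integrand on $\widehat X$ to that on $X$ and showing it equals the stated factor $\tfrac{1}{2\sqrt 2}\,e^{-u\mu^2/3}\,\sigma(2\sqrt 2\,\mu)$. The key observation is that the specific choices $\widehat\lambda_0=\lambda_0+\tfrac12\mathrm E$ and $\widehat w_2=w_2+\mathrm E$ are precisely the ones that make the Siegel--Narain theta for $\widehat X$ decouple, in the $\epsilon\to 0$ limit, into the Siegel--Narain theta for $X$ times a one-dimensional theta sum along the $\mathrm E$-direction, which we then convert to a Weierstrass sigma function.

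First I would write $\lambda\in H^2(\widehat X,\mathbb Z)+\widehat\lambda_0$ as $\lambda=\lambda_X+n\,\mathrm E$ with $\lambda_X\in H^2(X,\mathbb Z)+\lambda_0$ and $n\in\mathbb Z+\tfrac12$. Using $\mathrm E^2=-1$, $\mathrm e\perp\omega$, and $\omega_+\to\omega$ as $\epsilon\to 0$, the inputs to (\ref{siegnar}) split as $\lambda_+^2=(\lambda_X)_+^2$, $\lambda_-^2=(\lambda_X)_-^2-n^2$, $(\lambda,\bar\xi)=(\lambda_X,\bar\xi_X)-n\mu/(2\pi h)$ (since $\widehat S_-=S_-+\mu\,\mathrm e$ contributes $\mu\,\mathrm e/(2\pi h)$ to the $\mathrm E$-component of $\bar\xi$), $(\lambda,\widehat w_2)=(\lambda_X,w_2)-n$, and $\bar\xi_-^2=\bar\xi_{X,-}^2-\mu^2/(2\pi h)^2$. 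Substituting yields
\begin{equation*}
\bar\Theta_{\widehat X}(\widehat\xi)=e^{\mu^2/(8\pi y h^2)}\,\bar\Theta_X(\xi)\,\Phi(\mu,\tau),\qquad \Phi(\mu,\tau)=\sum_{n\in\mathbb Z+\tfrac12} q^{n^2/2}\,e^{i n\mu/h - i\pi n}.
\end{equation*}

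Next I would recognize $\Phi$ as a Jacobi theta: setting $n=m+\tfrac12$ and comparing with the definition of $\vartheta_{11}$ in (\ref{JacobiTheta}) gives $\Phi(\mu,\tau)=\vartheta_{11}\bigl(\mu/(2\pi h)-1\,\big|\,\tau\bigr)=-\vartheta_1\bigl(\mu/(2\pi h)\,\big|\,\tau\bigr)$, using $\vartheta_1(v\pm 1|\tau)=-\vartheta_1(v|\tau)$. Then I would apply the classical identity relating the Weierstrass sigma function of the lattice with periods $2\underline\omega=4\sqrt 2\,\pi h$ and $2\underline\omega'=\tau\cdot 4\sqrt 2\,\pi h$ to $\vartheta_1$,
\begin{equation*}
\sigma(z)=\frac{2\underline\omega}{\vartheta_1'(0|\tau)}\exp\!\left(\frac{\eta_1 z^2}{2\underline\omega}\right)\vartheta_1\!\left(\frac{z}{2\underline\omega}\,\Big|\,\tau\right),\qquad \eta_1=\frac{\pi^2 E_2(\tau)}{12\,\underline\omega},
\end{equation*}
at $z=2\sqrt 2\,\mu$. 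Combining $\vartheta_1'(0|\tau)=-2\pi\eta^3$ with the Jacobi triple product $\vartheta_2\vartheta_3\vartheta_4=2\eta^3$ (so $\eta^3/h=\vartheta_4$) yields $\Phi(\mu,\tau)=\tfrac{\vartheta_4}{2\sqrt 2}\,e^{-E_2\mu^2/(24 h^2)}\,\sigma(2\sqrt 2\,\mu)$.

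Finally I would assemble the pieces. Since $\sigma(\widehat X)=\sigma(X)-1$ and $\widehat S^2=S^2-\mu^2$, the prefactor ratio is $\widehat f_{\widehat X}/\widehat f_X = \vartheta_4^{-1}\,e^{-\mu^2\widehat T}$. Using $T=-E_2/(24h^2)+u/3$ from (\ref{qforms}) and $\widehat T-T=1/(8\pi y h^2)$: the non-holomorphic Gaussian from $\widehat T$ cancels the $e^{\mu^2/(8\pi yh^2)}$ produced by the theta decomposition, the $\vartheta_4$ from the triple product cancels the $\vartheta_4^{-1}$ from the signature shift, and the $\pm E_2\mu^2/(24h^2)$ pieces cancel between $-\mu^2 T$ and $\Phi$. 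What survives is exactly $\tfrac{1}{2\sqrt 2}\,e^{-u\mu^2/3}\,\sigma(2\sqrt 2\,\mu)$, which upon insertion into the derivative-plus-contraction bracket of (\ref{u-plane_integral}) produces (\ref{blowup_SO3}). The main obstacle is the bookkeeping of these cancellations: each individual piece (the Gaussian shift of $\bar\Theta$, the $E_2$-piece of $T$, the quasi-period $\eta_1$ appearing in $\sigma$, and the $\vartheta_4$ from the signature change) is non-modular on its own, and only their combination gives a modular-covariant object; the Jacobi triple product is the key algebraic identity that makes the resulting factor as clean as stated.
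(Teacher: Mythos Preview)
Your argument is correct and follows the same route as the paper: you decompose the $\widehat X$ Siegel--Narain sum along the $\mathrm E$-direction to produce the extra factor $-\,\vartheta_1\!\bigl(\mu/(2\pi h)\,\big|\,\tau\bigr)/\vartheta_4(\tau)\cdot e^{-\mu^2 T}$ (which the paper simply asserts), and then convert it to $\tfrac{1}{2\sqrt 2}\,e^{-u\mu^2/3}\,\sigma(2\sqrt 2\,\mu)$ via the same $\sigma$--$\vartheta_1$ identity and the Jacobi triple product $2\eta^3=\vartheta_2\vartheta_3\vartheta_4$. Your explicit tracking of the cancellations (the $1/(8\pi y h^2)$ non-holomorphic piece, the $E_2$-term of $T$, and the $\vartheta_4$ from the signature shift) is just a more detailed version of what the paper leaves implicit.
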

\begin{proof}
On $\widehat{X}$ the $u$-plane integral receives an additional factor of
\begin{equation}
 - \, \dfrac{\vartheta_1\left( \frac{\mu}{2\pi h} \Big| \tau \right)}{\vartheta_4(0|\tau)} \; e^{-\mu^2 \, T} \;.
\end{equation}
To prove Equation (\ref{newcpione_0}) one uses the identity
\begin{equation}
 \dfrac{\sigma(z)}{z} = \exp{\left(\dfrac{\pi^2 \, E_2 \, z^2}{24 \, \underline{\omega}^2}\right)} \;\, \dfrac{\vartheta_1(v|\tau)}{v \, \vartheta_1'(0|\tau)} 
\end{equation}
for $v=z/(2\underline{\omega})$ in the definition of the $u$-plane integral and
$2 \, \eta^3 = \vartheta_2 \vartheta_3 \vartheta_4$.
\end{proof}

The additional factor in Equation (\ref{newcpione_0}) has a series expansion in $\mu$ 
whose coefficients are rational polynomials in $u$ since
\begin{equation}
 \sigma(z) = \sum_{m,n=0}^\infty a_{mn} \; \left(\frac{1}{2}g_2\right)^m  \; (2\, g_3)^n \; \dfrac{z^{4m+6n+1}}{(4m+6n+1)!}
\end{equation}
where $g_2=u^2/12-1/16$, $g_3=u^3/216-u/192$. The coefficients $a_{m,n} \in \mathbb{Q}$ can be found in \cite[18.5.8]{AbramowitzStegun}.
The first terms are
\begin{equation}
\label{ws}
\begin{split} 
& \frac{1}{2\sqrt{2}} \, e^{-\frac{u\, \mu^2}{3}} \, \sigma(2\sqrt{2} \,\mu)\\
= & \; \mu - (2u) \, \frac{\mu^3}{3!} +  \Big( (2 \, u)^2 + 2 \Big) \, \frac{\mu^5}{5!} - \Big( (2u)^3 + 6 \, (2u) \Big) \frac{\mu^7}{7!} + O(\mu^{11}) \;. 
\end{split}
\end{equation}
\begin{remark}
The blowup function $\frac{1}{2\sqrt{2}} \, e^{-\frac{u\, \mu^2}{3}} \, \sigma(2\sqrt{2} \,\mu)$ agrees with the blowup function for the 
$\mathrm{SO}(3)$-Donaldson invariants derived in \cite{FintushelStern}. The expansion (\ref{ws}) agrees with the relations $\mathcal{D}^{\widehat{X},\omega+}_{w_2+\mathrm{E}}(\mathrm{z} \, \mathrm{e}^{2k})=0$ for $k \in \mathbb{N}$ and
for $\mathrm{z}\in \mathbf{A}(X)$
\begin{equation*}
\begin{split}
\mathcal{D}^{\widehat{X},\omega+}_{w_2+\mathrm{E}}(\mathrm{z} \, \mathrm{e}) & =  \mathcal{D}^{X,\omega}_{w_2}(\mathrm{z})\;, \\
\mathcal{D}^{\widehat{X},\omega+}_{w_2+\mathrm{E}}(\mathrm{z} \, \mathrm{e}^3) & = - \mathcal{D}^{X,\omega}_{w_2}(\mathrm{z}\, \mathrm{x})\;, \\
\mathcal{D}^{\widehat{X},\omega+}_{w_2+\mathrm{E}}(\mathrm{z} \, \mathrm{e}^5) & = \mathcal{D}^{X,\omega}_{w_2}(\mathrm{z} \, \mathrm{x}^2)
+ 2 \, \mathcal{D}^{X,\omega}_{w_2}(\mathrm{z})\;, \\
\mathcal{D}^{\widehat{X},\omega+}_{w_2+\mathrm{E}}(\mathrm{z} \, \mathrm{e}^7) & =  - \mathcal{D}^{X,\omega}_{w_2}(\mathrm{z} \, \mathrm{x}^3)
- 6 \, \mathcal{D}^{X,\omega}_{w_2}(\mathrm{z}) \;.
\end{split}
\end{equation*}
for the Donaldson invariants. The latter relations were first derived in \cite{Leness}.
\end{remark}

\noindent
We have a similar result for the $u$-plane integral for the gauge group $\mathrm{SU}(2)$:
\begin{theorem}[Blowup formula for $\mathrm{SU}(2)$]
\label{blowupSU2}
For $p \, \mathrm{x} \in H_0(\widehat{X}, \mathbb{Z})$ and $\widehat{S}=S + \mu \, \mathrm{e} \in H_2(\widehat{X},\mathbb{Z})$ we have that
\begin{equation}
\label{blowup_SU2}
 Z \Big( \widehat{X}, \omega+, \widehat{\lambda}_0 = \lambda_0, \widehat{w}_2=w_2 + \mathrm{E} \Big) = 
 \left. \left\lbrack (S,\omega) + 2  \dfrac{d}{d\rho} \right\rbrack\right|_{\rho=0} \; \widehat{\mathcal{G}}_{3}(\rho)\;,
\end{equation}
where 
\begin{equation}
\label{newcpione_j}
\widehat{\mathcal{G}}_{3}(\rho)= \int_{\Gamma_0(4) \backslash \mathbb{H}}^{\text{reg}}
\dfrac{dx dy}{y^{\frac{3}{2}}} \; \; \widehat f (p,\kappa) \; \bar\Theta(\xi) 
\; \; \; e^{-\frac{u\, \mu^2}{3}} \, \sigma_3(2\sqrt{2} \,\mu) \;.
\end{equation}
$\sigma_3(2\sqrt{2} \,\mu)$ is the Weierstrass sigma function for the periods $2\,\underline{\omega}=4\sqrt{2}\pi h$ and $2\,\underline{\omega}'=\tau \, 4\sqrt{2}\pi h$, and the 
half-period $\underline{\omega}_3=\omega'$. 
$\hat{f}$ and $\bar{\Theta}$ depend on $X$ only and were defined in (\ref{nwcnvpione}) and
(\ref{siegnar}) respectively. The quantities $u$ and $h$ were defined in (\ref{qforms}).
\end{theorem}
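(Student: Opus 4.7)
The plan is to proceed in direct parallel to the proof of Theorem \ref{blowupSO3}, tracking how the changes in $\widehat{\lambda}_0$ and $\widehat{w}_2$ modify the Siegel--Narain theta function and ultimately the extra factor. Writing $H^2(\widehat{X},\mathbb{R}) = H^2(X,\mathbb{R}) \oplus \mathbb{R}\,\mathrm{E}$, every $\widehat{\lambda} \in H^2(\widehat{X},\mathbb{Z}) + \widehat{\lambda}_0$ splits as $\lambda + n\,\mathrm{E}$ where $\lambda$ runs over $H^2(X,\mathbb{Z}) + \lambda_0$ and $n \in \mathbb{Z}$ (since now $\widehat{\lambda}_0 = \lambda_0$, no half-integral shift is needed). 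With the metric tuned to $\omega+ = \omega - \epsilon \,\mathrm{E}$, the $\mathrm{E}$-components are entirely anti-self-dual, so in $\bar{\Theta}(\xi)$ they contribute a factor
\begin{equation*}
\sum_{n \in \mathbb{Z}} (-1)^{n}\, q^{n^2/2}\, e^{-2\pi i\, n\,\mu/(2\pi h)} \;=\; \vartheta_4\!\left(\tfrac{\mu}{2\pi h}\,\Big|\,\tau\right),
\end{equation*}
where the sign $(-1)^n$ comes from $(\widehat{\lambda},\widehat{w}_2) \supset n\cdot(\mathrm{E},\mathrm{E}) = -n$ and the exponential from the piece $(\widehat{\lambda},\bar\xi_-)$ produced by $\widehat{S}_- = S_- - \mu\,\mathrm{e}$. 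This is the analog of the $\vartheta_1$ that appears in the $\mathrm{SO}(3)$ case and replaces it here.

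Next I would collect the $\mu$-dependent exponentials. Expanding $\widehat{S}^2 \widehat{T} = (S^2 - \mu^2)\widehat{T}$ in $\widehat{f}(p,\kappa)$ introduces a factor $e^{-\mu^2 \widehat{T}}$, and the anti-self-dual part of $\bar{\xi}$ also contributes the $-\pi\,\mu^2/(2\,y\,h^2)$ exponential that promotes $T$ to $\widehat{T}$ once combined with $e^{-\mu^2 T}$. After the standard bookkeeping (identical to that in the proof of Theorem \ref{blowupSO3}) the whole extra factor in the integrand reduces to
\begin{equation*}
e^{-\mu^2 T}\; \frac{\vartheta_4\!\left(\tfrac{\mu}{2\pi h}\,\big|\,\tau\right)}{\vartheta_4(0|\tau)}.
\end{equation*}

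The final step is to rewrite this as a Weierstrass $\sigma_3$ expression. Using $T = -\tfrac{1}{24}(E_2/h^2 - 8u)$ gives $e^{-\mu^2 T} = e^{-u\,\mu^2/3}\, e^{\mu^2 E_2/(24 h^2)}$, and I would apply the analog of the identity used in the proof of Theorem \ref{blowupSO3},
\begin{equation*}
\sigma_3(z) \;=\; \exp\!\left(\frac{\pi^2 E_2 \, z^2}{24\,\underline{\omega}^2}\right)\, \frac{\vartheta_4(v|\tau)}{\vartheta_4(0|\tau)},
\end{equation*}
with $2\underline{\omega} = 4\sqrt{2}\,\pi h$, $z = 2\sqrt{2}\,\mu$ and $v = z/(2\underline{\omega}) = \mu/(2\pi h)$. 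Under these identifications $\pi^2 E_2 z^2/(24\,\underline{\omega}^2) = E_2 \mu^2/(24 h^2)$, exactly the non-holomorphic correction already produced by $T$, so the two $E_2$-exponentials cancel and the extra factor collapses to $e^{-u\mu^2/3}\,\sigma_3(2\sqrt{2}\,\mu)$. Inserting this into the definition of $\widehat{\mathcal{G}}_3(\rho)$ and differentiating in $\rho$ as in Equation~(\ref{u-plane_integral}) yields Equation~(\ref{blowup_SU2}).

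I expect the main obstacle to be purely bookkeeping: keeping the signs straight when computing $(\widehat{\lambda},\widehat{w}_2)$ versus $(\widehat{\lambda},\bar\xi)$, and verifying the precise normalization of the $\vartheta_4$-version of the sigma-theta identity with the periods $2\underline{\omega} = 4\sqrt{2}\pi h$, $2\underline{\omega}' = \tau\cdot 4\sqrt{2}\pi h$. Once the SO(3) analog is in hand, neither step introduces a new mathematical ingredient; the only genuinely different feature is the replacement $\vartheta_1 \rightsquigarrow \vartheta_4$ coming from the change of the lattice $\widehat{\lambda}_0$ along the exceptional direction.
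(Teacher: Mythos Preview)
Your proposal is correct and follows essentially the same route as the paper: factor the Siegel--Narain theta function along the exceptional direction to obtain $\vartheta_4(\mu/(2\pi h)\,|\,\tau)$, then apply the sigma--theta identity $\sigma_3(z)=\exp(\pi^2 E_2 z^2/(24\,\underline{\omega}^2))\,\vartheta_4(v|\tau)/\vartheta_4(0|\tau)$ with the stated periods. The paper's own proof is terser---it states the general identity for all $\sigma_j$ in terms of $\vartheta_{ab}$ and then specializes to $j=3$, $(a,b)=(0,1)$---but the content is the same, and your explicit treatment of the lattice sum and the sign $(-1)^n$ from $(\widehat{\lambda},\widehat{w}_2)$ fills in what the paper leaves implicit by reference to the $\mathrm{SO}(3)$ case.
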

\begin{proof}
For $\omega_j = (1-b) \, \omega + (1-a) \, \omega'$ with $a,b$ not both equal to $1$, it follows
\begin{equation}
  \frac{\vartheta_{1}( v + \frac{\underline{\omega}_j}{2\underline{\omega}} |\tau)}
  {\vartheta_{1}( \frac{\;\underline{\omega}_j}{2\underline{\omega}} |\tau)} = e^{-\delta_{a,0} \pi i v} \; 
\frac{\vartheta_{ab}( v |\tau)}{\vartheta_{ab}( 0 |\tau)} 
\end{equation}
where $v=z/(2\underline{\omega})$. The index $j$ is given in terms of $(a,b)$ by the map $j\leftrightarrow (a,b)$ with $1 \leftrightarrow (1,0), 2 \leftrightarrow (0,0), 3 \leftrightarrow (0,1)$. 
The following relation between the Jacobi theta functions and the Weierstrass sigma functions holds:
\begin{equation}
 \sigma_j(z) 
= \exp{\left(\dfrac{\pi^2 \, E_2 \, z^2}{24 \, \underline{\omega}^2}\right)} \; \frac{\vartheta_{ab}(v|\tau)}{\vartheta_{ab}(\tau)}\;.
\end{equation}
For $j\in \{1,2,3\}$ the function $\sigma_j(2\sqrt{2} \,\mu)$ is the Weierstrass sigma function for for the periods $2\,\underline{\omega}=4\sqrt{2}\pi h$ and $2\,\underline{\omega}'=\tau \, 4\sqrt{2}\pi h$, and the  half-period $\underline{\omega}_j$.
Setting $(a,b)=(0,1)$ and $j=3$, Equation (\ref{newcpione_j}) follows.
\end{proof}
The additional factor in Equation (\ref{newcpione_j}) has a series expansion in $\mu$ 
whose coefficients are rational polynomials in $u$. The first terms are
\begin{equation}
\label{ws3}
\begin{split}
e^{-\frac{u\, \mu^2}{3}} \, \sigma_3(2\sqrt{2}\mu) 
= 1 - 2 \, \frac{\mu^4}{4!} + 8 \, (2u) \, \frac{\mu^6}{6!} - \Big( 32 \, (2u)^2 + 4 \Big) \, \frac{\mu^8}{8!} + O(\mu^{10})\;.
\end{split}
\end{equation}
\begin{remark}
The blowup function $e^{-\frac{u\, \mu^2}{3}} \, \sigma_3(2\sqrt{2} \,\mu)$ agrees with the blowup function for the $\mathrm{SU}(2)$-Donaldson invariants derived in \cite{FintushelStern}. 
The expansion (\ref{ws3}) agrees with the relations $\mathcal{D}^{\widehat{X},\omega+}_{w_2}(\mathrm{z} \, \mathrm{e}^{2k-1})=0$ for $k \in \mathbb{N}$ and for $\mathrm{z}\in \mathbf{A}(X)$
\begin{equation*}
\begin{split}
\mathcal{D}^{\widehat{X},\omega+}_{w_2}(\mathrm{z}) & =  \mathcal{D}^{X,\omega}_{w_2}(\mathrm{z})\;, \\
\mathcal{D}^{\widehat{X},\omega+}_{w_2}(\mathrm{z} \, \mathrm{e}^4) & = - 2 \, \mathcal{D}^{X,\omega}_{w_2}(\mathrm{z})\;, \\
\mathcal{D}^{\widehat{X},\omega+}_{w_2}(\mathrm{z} \, \mathrm{e}^6) & = 8 \, \mathcal{D}^{X,\omega}_{w_2}(\mathrm{z} \, \mathrm{x}) \;, \\
\mathcal{D}^{\widehat{X},\omega+}_{w_2}(\mathrm{z} \, \mathrm{e}^8) & = - 32 \, \mathcal{D}^{X,\omega}_{w_2}(\mathrm{z} \, \mathrm{x}^2)
- 4 \, \mathcal{D}^{X,\omega}_{w_2}(\mathrm{z})\;.
\end{split}
\end{equation*}
for the Donaldson invariants. 
\end{remark}

\section{Invariants for some complex surfaces}
\label{complex_surfaces}

\subsection{The projective plane}
The Fubini-Study metric $g$ on $\mathbb{C}\mathrm{P}^2$ is a K\"ahler metric with the 
K\"ahler form $K= \frac{i}{2} g_{i\bar{\jmath}} \, dz^i \wedge dz^{\bar{\jmath}}$.
It follows that the first Chern class of the dual of the hyperplane bundle over 
$\mathbb{C}\mathrm{P}^2$ is $\operatorname{H} = K/\pi$. We then have that
$\int_{\mathbb{C}\mathrm{P}^2} \operatorname{H}^2 =1$, $c_1(\mathbb{C}\mathrm{P}^2)=3 \operatorname{H}$, and $p_1(\mathbb{C}\mathrm{P}^2)=3 \operatorname{H}^2$. 
The Poincar\'e dual $\operatorname{h}$ of $\operatorname{H}$ is a generator of the rank-one homology group
$H_2(\mathbb{C}\mathrm{P}^2,\mathbb{Z})$.  
We denote the integral lifting of $w_2(E)$ by $2\lambda_0=a \, \mathrm{H} \in H^2(\mathbb{C}\mathrm{P}^2,\mathbb{Z})$,
and the integral lifting of $w_2(\mathbb{C}\mathrm{P}^2)$ by $w_2=- b \, \mathrm{H}\in H^2(\mathbb{C}\mathrm{P}^2,\mathbb{Z})$. Notice that $b=1$ as $\mathbb{C}\mathrm{P}^2$ is not spin. However, all formulas we will write down will remain well-defined for $b=0$ as well. 
We have the following lemma:
\begin{lemma}
On $X=\mathbb{C}\mathrm{P}^2$ let $\omega=\mathrm{H}$ be the period point
of the metric. Let $2\,\lambda_0 = a \, \mathrm{H}$ with $a \in \{0,1\}$ be an integral lifting 
of $w_2(E)$ and $b=1$. For $(X,\omega,\lambda_0,w_2=-b \, \mathrm{H})$ 
the Siegel-Narain theta function is
\begin{equation}
\label{siegnar_CP2}
\begin{split}
\bar \Theta  =  \exp{\left(\frac{\pi}{2\,y} \, \bar{\xi}_+^2\right)} \; \overline{\vartheta_{ab}\Big((\xi_+,\mathrm{H}) \Big| \tau\Big)}\;
\end{split}
\end{equation}
where $\bar\xi=\bar\xi_+=\rho \, y\, h \, \omega$. 
\end{lemma}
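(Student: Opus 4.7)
The plan is to substitute the specific geometric data for $\mathbb{C}\mathrm{P}^2$ into the general Siegel--Narain formula (\ref{siegnar}) and recognize the resulting lattice sum as a conjugated Jacobi theta function.

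First, I would unpack the intersection-theoretic consequences of $X=\mathbb{C}\mathrm{P}^2$ with $\omega=\mathrm{H}$. Since $H^2(\mathbb{C}\mathrm{P}^2,\mathbb{R})=\mathbb{R}\,\mathrm{H}$ is one-dimensional with $\mathrm{H}^2=1$, the intersection form is positive definite in the direction of $\omega$, so every class is self-dual: $\lambda_+=\lambda$ and $\lambda_-=0$ for all $\lambda\in H^2(X,\mathbb{Z})+\lambda_0$. This also forces $S_-=0$, hence $\bar\xi_-=S_-/(2\pi h)=0$ and $\bar\xi=\bar\xi_+=\rho\,y\,h\,\mathrm{H}$. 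In particular, the prefactor in (\ref{siegnar}) collapses to $\exp\!\left[\tfrac{\pi}{2y}\bar\xi_+^2\right]$ since the $\bar\xi_-^2$ contribution vanishes.

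Next, I would parametrize the shifted lattice. Any $\lambda\in H^2(X,\mathbb{Z})+\lambda_0$ can be written as $\lambda=m\,\mathrm{H}$ with $m=n+a/2$, $n\in\mathbb{Z}$, using the hypothesis $2\lambda_0=a\,\mathrm{H}$. Then one computes $\lambda_+^2=m^2$, $(\lambda,\bar\xi)=m\,\rho\,y\,h$, and $(\lambda,w_2)=-b\,m$. Substituting into the exponent in (\ref{siegnar}) gives the telescoped sum
\begin{equation*}
\sum_{n\in\mathbb{Z}}\exp\!\left[-i\pi\bar\tau\,m^2-2\pi i\,m(\bar\xi_+,\mathrm{H})-\pi i\,m\,b\right],
\end{equation*}
where I used $-b=+(-b)$ to keep the sign of the $w_2$ term transparent.

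Finally I would match this against the Jacobi theta function. Writing $\vartheta_{ab}(v|\tau)=\sum_n e^{i\pi\tau m^2+2\pi i m v+\pi i m b}$ with $m=n+a/2$ (obtained by rewriting the exponent in the definition from the notation section), taking the complex conjugate produces exactly the sum above with $\bar v=(\bar\xi_+,\mathrm{H})$, which is real in the present setting and therefore equal to $(\xi_+,\mathrm{H})$. Assembling the prefactor with this identification yields (\ref{siegnar_CP2}). The whole argument is essentially bookkeeping; the only subtle point — and the one I would flag most carefully — is confirming that the $q$-exponent convention in the definition of $\vartheta_{ab}$ (with the factor $(2n+a)^2/8$) matches the convention $-i\pi\bar\tau\lambda_+^2$ in the Siegel--Narain theta after setting $m=n+a/2$, and likewise that the shift of $v$ by $b/2$ in $\vartheta_{ab}$ absorbs the $(\lambda,w_2)$ factor correctly. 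Once these conventions line up, no further analysis is needed.
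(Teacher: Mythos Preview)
Your proof is correct and is the natural direct computation; the paper states this lemma without proof, so there is nothing to compare against beyond noting that your argument is exactly what one would supply. One small quibble: you claim that $(\bar\xi_+,\mathrm{H})=\rho\,y\,h$ is real, but $h=\tfrac{1}{2}\vartheta_2\vartheta_3$ is a holomorphic modular form and hence complex-valued; the identification with $\overline{\vartheta_{ab}((\xi_+,\mathrm{H})\mid\tau)}$ still works because conjugating $\vartheta_{ab}(v\mid\tau)$ replaces $v$ by $\bar v$ and $\tau$ by $\bar\tau$, which is precisely what the paper's bar convention on $\xi_+$ encodes.
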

\noindent
It was shown in \cite{MooreWitten} that for $\sigma=1$ and any value of $a$ and $b$ we have
\begin{equation}
Z_{\tau=0} = Z_{\tau=2} =0 \;,
\end{equation}
hence $Z = Z_{\tau=\infty}$.
The $u$-plane integral in Equation (\ref{u-plane_integral}) can be
expanded as follows
\begin{equation}
 Z_{\tau=\infty}  = \sum_{m,n \in \mathbb{N}_0} \dfrac{p^m}{m!} \; \dfrac{\kappa^{2n-ab+1}}{(2n-ab+1)!} \; D^{\, ab}_{mn} 
\end{equation}
where
\begin{equation}
\label{coefficients_preint}
D^{\, ab}_{mn} = -\frac{\sqrt{2}}{32\pi} \sum_{k=0}^n  \int^{\text{reg}}_{\Gamma_0(4)\backslash \mathbb{H}}  \frac{dx \, dy}{y^{\frac{3}{2}}} \; R^{\,ab}_{mnk} \; \widehat{E}_2^k\; \left[  \overline{\vartheta_{ab}(0 | \tau)} - 4 \, y \,  \overline{\vartheta'_{ab}(0 | \tau)} \right] \;.
\end{equation}
For $m,n \in \mathbb{N}_0$ and $0 \le k \le n$ we have set
\begin{equation}
\label{R_mnk}
 R^{\,ab}_{mnk} = (-1)^{k+ab+1} \; \dfrac{(2n-ab+1)!}{k! \, (n-k)!} \; \dfrac{2^{m-3k-ab-1}}{3^n} \; 
 \dfrac{\vartheta_4 \cdot u^{m+n-k}}{h^{3+2k-ab} \cdot f_{2}} 
\end{equation}
where $u$, $h$, and $f_2$ were defined in (\ref{qforms}).
To evaluate the regularized $u$-plane integral we introduce the non-holomorphic modular form
$Q_{ab}(\tau)=Q^+_{ab}(\tau)+Q^-_{ab}(\tau)$  for $\Gamma_0(4)$ of weight $(3/2-ab)$
such that 
\begin{equation}\label{total_derivative}
 8 \, \sqrt{2} \pi \, i \; \frac{d}{d\bar{\tau}} \; Q_{ab}\left(\tau\right) = y^{-\frac{3}{2}} \, 
 \left[ \overline{\vartheta_{ab}(0 | \tau)} - 4 \, y \,  \overline{\vartheta'_{ab}(0 | \tau)} \right] \;.
\end{equation}
We then have the following extension of \cite[(9.18)]{MooreWitten} which includes the case $a=b=1$:
\begin{lemma}\label{EkQ}
The weakly holomorphic function
\begin{equation}
\label{EkQab}
 \mathcal{E}^k \left[ Q_{ab} \right] = \sum_{j=0}^k (-1)^j \; \binom{k}{j} \; \frac{\Gamma\left(\frac{3}{2}-ab\right)}{\Gamma\left(\frac{3}{2}-ab+j\right)} \; 2^{2j} \; 3^j
 \; E_2^{k-j}(\tau) \; \left(q \, \frac{d}{dq} \right)^j Q_{ab}\left(\tau\right) 
\end{equation}
has weight $2l+3/2-ab$ and satisfies
\begin{equation}
 8\,\sqrt{2} \pi \, i \; \frac{d}{d\bar{\tau}}\; \mathcal{E}^k \left[ Q_{ab} \right]  = 
y^{-\frac{3}{2}} \, \widehat{E}_2^k(\tau) \;
 \left[ \overline{\vartheta_{ab}(0 | \tau)} - 4 \, y \,  \overline{\vartheta'_{ab}(0 | \tau)} \right] \;. 
\end{equation}
\end{lemma}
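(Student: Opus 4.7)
The proof of Lemma~\ref{EkQ} splits into a weight count and a direct calculation of the antiholomorphic derivative. (I read the stated weight $2l+3/2-ab$ as $2k+3/2-ab$, apparently a typo.)

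For the weight, each summand in (\ref{EkQab}) is a product of $E_2^{k-j}$, quasi-modular of weight $2(k-j)$, with $D^j Q_{ab}$, which has nearly-holomorphic weight $(3/2-ab)+2j$ since $D = q\,d/dq$ raises the weight by two. This gives uniform weight $2k+3/2-ab$ on every summand, and hence the claimed weight of $\mathcal{E}^k[Q_{ab}]$.

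For the differential equation I apply $\partial_{\bar\tau}$ termwise to (\ref{EkQab}). Since $E_2(\tau)$ is holomorphic and $D$ commutes with $\partial_{\bar\tau}$ (mixed partials of a smooth function commute), only the factor $Q_{ab}$ contributes:
\begin{equation*}
8\sqrt{2}\pi i\,\frac{d}{d\bar\tau}\mathcal{E}^k[Q_{ab}] = \sum_{j=0}^k (-1)^j \binom{k}{j}\frac{\Gamma(3/2-ab)}{\Gamma(3/2-ab+j)}\,2^{2j}\,3^j\, E_2^{k-j}(\tau)\, D^j F(\tau,\bar\tau),
\end{equation*}
where $F(\tau,\bar\tau) = y^{-3/2}\overline{\vartheta_{ab}(0|\tau)} - 4y^{-1/2}\overline{\vartheta'_{ab}(0|\tau)}$ is the right-hand side of (\ref{total_derivative}) multiplied by $8\sqrt{2}\pi i$. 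Because the antiholomorphic theta values are $D$-invariants, $D^j$ acts only on the powers of $y$, and the rule $Dy^s = -s y^{s-1}/(4\pi)$ yields closed-form expressions for $D^j y^{-3/2}$ and $D^j y^{-1/2}$ with explicit Pochhammer-symbol coefficients.

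The final step is to identify the resulting double sum with the target $y^{-3/2}\widehat{E}_2^k[\overline{\vartheta_{ab}}-4y\overline{\vartheta'_{ab}}]$. Expanding $\widehat{E}_2^k = \sum_{l=0}^k \binom{k}{l} E_2^{k-l}(-3/(\pi y))^l$ by the binomial theorem and matching coefficients of each monomial $E_2^{k-l}\,y^{-3/2-l}\overline{\vartheta_{ab}}$ and $E_2^{k-l}\,y^{-1/2-l}\overline{\vartheta'_{ab}}$ separately reduces the claim to a single combinatorial identity between the specific coefficients $(-1)^j \binom{k}{j}\Gamma(3/2-ab)/\Gamma(3/2-ab+j)\cdot 2^{2j} 3^j$ of (\ref{EkQab}) and the binomial coefficients $\binom{k}{l}$; this identity is of Vandermonde/hypergeometric type and can be verified by induction on $k$.

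The main obstacle will be this final coefficient matching: the bookkeeping of Pochhammer symbols, binomial coefficients, and factorials must collapse the double sum exactly into the binomial expansion of $\widehat{E}_2^k$. A conceptually cleaner alternative, should one wish to avoid the explicit combinatorics, is to identify $\mathcal{E}^k[Q_{ab}]$ with the holomorphic projection of the iterate of a completed Serre-type derivative applied to $Q_{ab}$, in which case the differential identity follows recursively from the base case (\ref{total_derivative}) together with $\partial_{\bar\tau}E_2 = 0$.
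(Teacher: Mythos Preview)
The paper does not prove this lemma; it presents it as an extension of \cite[(9.18)]{MooreWitten} to the case $a=b=1$ and then proceeds directly to the explicit description of $Q_{ab}$. Your argument is therefore the only proof in play, and the strategy (including the typo fix $2l\to 2k$) is correct.

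One simplification you overlook makes the final step trivial. From Table~(\ref{JacobiTheta}) exactly one of $\vartheta_{ab}(0|\tau)$ and $\vartheta'_{ab}(0|\tau)$ vanishes for each pair $(a,b)$, so in every case $F(\tau,\bar\tau) = y^{-w}\, G(\bar\tau)$ with $w = 3/2 - ab$ and $G$ antiholomorphic. Then
\[
D^j y^{-w} \;=\; \frac{\Gamma(w+j)}{\Gamma(w)\,(4\pi)^j}\, y^{-w-j},
\]
and the factor $\Gamma(w)/\Gamma(w+j)$ in (\ref{EkQab}) cancels this Pochhammer ratio exactly. What remains is
\[
y^{-w}\, G(\bar\tau) \sum_{j=0}^k \binom{k}{j}\, E_2^{k-j}(\tau) \left(\frac{-3}{\pi y}\right)^{j} \;=\; y^{-w}\, G(\bar\tau)\,\widehat{E}_2^{\,k}(\tau),
\]
which is the asserted right-hand side. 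So no Vandermonde or hypergeometric identity is needed---only the binomial theorem. In fact the term-by-term matching you propose would \emph{fail} for the vanishing theta term, since the Gamma ratio in (\ref{EkQab}) is tuned to the surviving exponent $-w$; the theta-function vanishing is essential here, not incidental.
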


\subsubsection{The case $a=0$ or $b=0$.} These non-holomorphic modular forms of weight 3/2 for $\Gamma_0(4)$
were constructed by Zagier \cite{Zagier} and reviewed in \cite{MooreWitten}. 
The holomorphic parts of Zagier's weight 3/2 Maass-Eisenstein series,
which first arose \cite{HirzebruchZagier} in connection with
intersection theory for certain Hilbert modular surfaces, are generating functions for Hurwitz class numbers.
The holomorphic part of Zagier's weight 3/2 Maass-Eisenstein series
is the generating function for Hurwitz class numbers.
They have series expansions of the form
\begin{equation}\label{Qplus_0}
 \begin{split}
  Q_{10}^+\left(\tau\right) &= \frac{1}{q^{\frac{1}{8}}} \; \sum_{l > 0} \mathcal{H}_{4l-1} \; q^{\frac{l}{2}} \;,\\
  Q_{00}^+\left(\tau\right) &= \sum_{l \ge 0} \mathcal{H}_{4l} \; q^{\frac{l}{2}} 
 \end{split}
\end{equation}
where $\mathcal{H}_{\alpha}$ are the Hurwitz class numbers. The first non vanishing Hurwitz class numbers are as follows:
\begin{equation*}
\begin{array}{c|c|c|c|c|c|c|c}
 \mathcal{H}_0 & \mathcal{H}_3 & \mathcal{H}_4 & \mathcal{H}_7 & \mathcal{H}_8 & \mathcal{H}_{11} & \mathcal{H}_{12} & \dots \\
 \hline &&&&&&& \\[-2ex]
 - \frac{1}{12} & \frac{1}{3} & \frac{1}{2} & 1 & 1 & 1 & \frac{4}{3} & \dots
\end{array}
\end{equation*}
The non-holomorphic parts have series expansions of the form
\begin{equation}\label{Qminus_0}
\begin{split}
 Q^{-}_{10}\left(\tau\right) & =  \frac{1}{8\sqrt{2\pi}} \; \sum_{l = - \infty}^\infty \; (l+\frac{1}{2}) \cdot \Gamma\left( -\frac{1}{2} , 2 \, \pi \, \left(l+\frac{1}{2}\right)^2 \, y \right) \; q^{-\frac{(l+1/2)^2}{2}} \;,\\
 Q^{-}_{00}\left(\tau\right) & =  \frac{1}{8\sqrt{2\pi}} \; \sum_{l = - \infty}^\infty \; l \cdot \Gamma\left( -\frac{1}{2} , 2 \, \pi \, l^2 \, y \right) \; q^{-\frac{l^2}{2}} \;,
\end{split}
\end{equation}
where $\Gamma(3/2,x)$ is the incomplete gamma function
\begin{equation}
 \Gamma(\alpha,x) = \int_x^\infty e^{-t} \; t^{\alpha-1} \; dt \;.
\end{equation}
We also have set $Q_{01}(\tau)=Q_{00}(\tau)-Q_{10}(\tau)+\frac{1}{2}Q_{00}((\tau+2)/4)$
and write
\begin{equation}
  Q_{01}^+\left(\tau\right) = \sum_{n \ge 0} \mathcal{R}_{n} \; q^{\frac{n}{2}}  \;.
\end{equation}
The first non vanishing coefficients in the series expansion are as follows:
\begin{equation*}
\begin{array}{c|c|c|c|c|c}
 \mathcal{R}_0 & \mathcal{R}_1 & \mathcal{R}_2 & \mathcal{R}_3 & \mathcal{R}_4 & \dots \\
 \hline &&&&& \\[-2ex]
 - \frac{1}{8} & -\frac{1}{4} & \frac{1}{2} & -1 & \frac{5}{4} & \dots
\end{array}
\end{equation*}
All non-holomorphic parts have an exponential decay since
\begin{equation}
\label{decay}
  \Gamma\left( \alpha , t\right) = t^{\alpha-1} \; e^{-t} \; \left( 1 + O(t^{-1}) \right)
\qquad (t \to \infty) \;.
\end{equation}

\subsubsection{The case $a=1$ and $b=1$.}
The harmonic Maass form of weight 1/2 was constructed in \cite{MalmendierOno}.
The holomorphic part has a series expansion of the form
\begin{equation}\label{Qplus}
 Q_{11}^+\left(\tau\right)  = \frac{1}{q^{\frac{1}{8}}} \; \sum_{l \ge 0} H_l \; q^{\frac{l}{2}} 
 = \frac{1}{q^{\frac{1}{8}}}\left( 1 + 28 \, q^{\frac{1}{2}} + 39 \, q + 196 \, q^{\frac{3}{2}} + 161 \, q^2 + \dots \right) 
\end{equation}
where the coefficients $H_l$ were computed in \cite{MalmendierOno}.
The non-holomorphic part $Q_{11}^{-}$ is
\begin{equation}\label{Qminus}
\begin{split}
 Q^{-}_{11}\left(\tau\right) & =  
- \frac{2i}{\sqrt{\pi}} \; \sum_{l \ge 0} (-1)^l \; \Gamma\left( \frac{1}{2} , \, 2\, \pi \, \left(l+\frac{1}{2}\right)^2 \, y \right) \; q^{-\frac{(l+1/2)^2}{2}} \;.
\end{split}
\end{equation}
The non-holomorphic part 
$Q^{-}_{11}$ has an exponential decay similar to the one in Equation (\ref{decay}).
The modular form $Q_{11}(\tau)$ is naturally related to one of Ramanujan's mock theta functions
\begin{equation*}
\begin{split}
M(q^8)& =q^{-1}\sum_{n=0}^{\infty}\frac{(-1)^{n+1}q^{8(n+1)^2}\prod_{k=1}^n
(1-q^{16k-8})}{\prod_{k=1}^{n+1}(1+q^{16k-8})^2}\\
& =-q^7+2q^{15}-3q^{23}+\cdots.
\end{split}
\end{equation*}
In \cite{MalmendierOno} it was proved that $Q_{11}(q^8)+4 \, M(q^8)$
is a weight $1/2$ weakly holomorphic modular form, and
$$
\frac{1}{\eta(\tau)}\cdot \Big( Q_{11}(q)+4 \, M(q)\Big)
$$
is a modular function.

\subsubsection{The evaluation of the $u$-plane integral.}
It was shown in \cite{MooreWitten} that the cusp contribution at $\tau=\infty$ to the 
regularized $u$-plane integral can be evaluated as follows: in Equation (\ref{coefficients}) we integrate by parts using the modular forms constructed in Lemma \ref{EkQ}, i.e., we rewrite an integrand $f$ as a total derivative using
\begin{equation*}
 dx\wedge dy \; \partial_{\bar{\tau}} f  = \frac{1}{2} \, dx \wedge dy \; \left( \partial_{x} + i \, \partial_{y} \right) \, f
= -\frac{i}{2} \, d\Big(  f \, dx + i \, f \, dy \Big) \;.
\end{equation*}
We carry out the integral along the boundary $x=\re{(\tau)} \in [0,4]$ and $y \gg 1$ fixed. This extracts the constant term
coefficient. We then take the limit $y \to \infty$. Since all non-holomorphic parts
have an exponential decay the non-holomorphic dependence drops out. The following expression for
the $u$-plane integral were obtained for the gauge group $\mathrm{SU}(2)$ in \cite{MooreWitten} and
$\mathrm{SO}(3)$ in \cite{MalmendierOno}. Additional information about the evaluation of the $u$-plane integral as well as the geometry of the Seiberg-Witten curve can be found in \cite{M2,M}.
\begin{theorem}
\label{evaluation1}
On $X=\mathbb{C}\mathrm{P}^2$ let $\omega=\mathrm{H}$ be the period point of the metric. 
Let $2\,\lambda_0 = a \, \mathrm{H}$ with $a \in \{0,1\}$ be an integral lifting of $w_2(E)$ and
$b=1$. For $(X,\omega,\lambda_0,w_2=-b \, \mathrm{H})$ 
the $u$-plane integral in the variables $p \, \mathrm{x} \in H_0(X, \mathbb{Z})$,
$S=\kappa \,\mathrm{h} \in H_2(X, \mathbb{Z})$ is
\begin{equation}
 Z=Z_{\tau=\infty} = \sum_{m,n \in \mathbb{N}_0} \dfrac{p^m}{m!} \dfrac{\kappa^{2n-ab+1}}{(2n-ab+1)!} \; D^{\, ab}_{mn} 
\end{equation}
where
\begin{equation}
\label{coefficients}
D^{\, ab}_{mn} =  \sum_{k=0}^n  \Big[  R^{\, ab}_{mnk} \; \; \mathcal{E}^k[Q_{ab}^+(\tau)] \Big]_{q^0}
\end{equation}
and $R^{\,ab}_{mnk}$ and $\mathcal{E}^k[Q_{ab}(\tau)]$ were defined in
(\ref{R_mnk}) and (\ref{EkQab}) respectively.
\end{theorem}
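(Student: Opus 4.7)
The plan is to start from the pre-integrated expression \eqref{coefficients_preint} and convert the integrand into a total $\bar{\tau}$-derivative using Lemma \ref{EkQ}, so that regularized integration over the fundamental domain $\Gamma_0(4)\backslash\mathbb{H}$ reduces to a boundary contribution at the cusp $\tau=\infty$. Concretely, I would rewrite the bracket $[\overline{\vartheta_{ab}(0|\tau)} - 4y\,\overline{\vartheta'_{ab}(0|\tau)}]$ in the integrand of \eqref{coefficients_preint} via \eqref{total_derivative} as $8\sqrt{2}\pi i\, y^{3/2}\,\partial_{\bar{\tau}}Q_{ab}(\tau)$, and multiply by $\widehat{E}_2^k$. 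Lemma \ref{EkQ} then packages the result into $8\sqrt{2}\pi i\,\partial_{\bar{\tau}}\mathcal{E}^k[Q_{ab}]$. Hence each summand in \eqref{coefficients_preint} takes the form
\begin{equation*}
-\frac{\sqrt{2}}{32\pi}\cdot 8\sqrt{2}\pi i \int^{\text{reg}}_{\Gamma_0(4)\backslash\mathbb{H}} dx\,dy\; R^{\,ab}_{mnk}\,\partial_{\bar{\tau}}\mathcal{E}^k[Q_{ab}(\tau)].
\end{equation*}

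Next I would use the identity $dx\wedge dy\,\partial_{\bar{\tau}}f = -\tfrac{i}{2}\,d(f\,dx + i f\,dy)$ noted in the text, so the integrand is genuinely exact, and apply Stokes' theorem on the truncated fundamental domain with $y\le Y$. The four vertical and bottom arcs of the $\Gamma_0(4)$ fundamental domain cancel in pairs by the $\Gamma_0(4)$-invariance of the integrand (which has total weight $0$ once $y^{-3/2}$ is folded in, given that $\mathcal{E}^k[Q_{ab}]$ has weight $2k+3/2-ab$ and $R^{\,ab}_{mnk}$ absorbs the remaining weight). What is left is the horizontal segment at $y=Y$, parametrized by $x=\operatorname{Re}(\tau)\in[0,4]$. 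This segment computes $4\cdot[\text{const term in }x]$, i.e., extracts the constant Fourier coefficient in $q=e^{2\pi i\tau}$ of $R^{\,ab}_{mnk}\,\mathcal{E}^k[Q_{ab}]$ as a function of $\tau$ at height $Y$. Combining prefactors $-\frac{\sqrt{2}}{32\pi}\cdot 8\sqrt{2}\pi i\cdot(-\tfrac{i}{2})\cdot 4 = 1$ gives, in the limit $Y\to\infty$,
\begin{equation*}
D^{\,ab}_{mn}=\sum_{k=0}^n \Big[R^{\,ab}_{mnk}\,\mathcal{E}^k[Q_{ab}(\tau)]\Big]_{q^0,\,y\to\infty}.
\end{equation*}

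Then I would invoke the exponential decay \eqref{decay} of the incomplete gamma function to show that in the limit $y\to\infty$ the non-holomorphic parts $Q^-_{ab}$ and $\widehat{E}_2-E_2=3/(\pi y)$ contributions either vanish exponentially or combine to give a finite holomorphic truncation. The cleanest formulation is that $\mathcal{E}^k[Q_{ab}]$ is designed (via the $E_2$–$\widehat{E}_2$ replacement built into the combinatorial sum \eqref{EkQab}) so that its purely holomorphic $q$-expansion agrees with that of $\mathcal{E}^k[Q^+_{ab}]$ as $y\to\infty$, while its non-holomorphic piece decays. Consequently the constant term in $q$ is exactly the constant term of $R^{\,ab}_{mnk}\,\mathcal{E}^k[Q^+_{ab}]$, yielding \eqref{coefficients}. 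The vanishing $Z_{\tau=0}=Z_{\tau=2}=0$ (cited from \cite{MooreWitten} for $\sigma=1$) ensures no contribution comes from the other cusps, so $Z=Z_{\tau=\infty}$.

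The main obstacle I anticipate is the careful bookkeeping at the cusp: verifying that the boundary pieces of the $\Gamma_0(4)$ fundamental domain other than the one at $\tau=\infty$ really cancel or vanish (this uses modular invariance of the full integrand at weight $0$), and that the $y\to\infty$ limit commutes with extracting the $q^0$-coefficient. The second subtlety is ensuring that the $\widehat{E}_2\to E_2$ replacement produced by Lemma \ref{EkQ} matches precisely the prescription used to define $R^{\,ab}_{mnk}$; the combinatorial identity \eqref{EkQab} is designed exactly for this, but the constants (in particular the factor $2^{2j}3^j$ and the Gamma-quotient) must match the normalizations in \eqref{nwcnvpione} and \eqref{R_mnk}. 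Once these are reconciled, both \cite{MooreWitten} (for $a=0$ or $b=0$) and \cite{MalmendierOno} (for $a=b=1$) supply the remaining input, so the case $a=b=1$ here is a uniform restatement covering all values of $(a,b)$.
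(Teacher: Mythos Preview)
Your proposal is correct and follows essentially the same route as the paper: rewrite the integrand via Lemma~\ref{EkQ} as a total $\bar\tau$-derivative, apply the identity $dx\wedge dy\,\partial_{\bar\tau}f=-\tfrac{i}{2}\,d(f\,dx+if\,dy)$, pick up only the boundary segment at $y\gg 1$, $x\in[0,4]$, extract the $q^0$-coefficient, and let the non-holomorphic pieces die off exponentially as $y\to\infty$. One small bookkeeping point: your displayed product $-\tfrac{\sqrt{2}}{32\pi}\cdot 8\sqrt{2}\pi i\cdot(-\tfrac{i}{2})\cdot 4$ actually equals $-1$, and the missing sign comes from the induced orientation of the top boundary in Stokes' theorem (the segment is traversed from $x=4$ to $x=0$); once that is included the normalization is indeed $+1$.
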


\noindent
For concreteness we list the first non vanishing coefficients of the generating functions 
for $a \in \{0,1\}$ and $b=1$ in Theorem \ref{evaluation1}:
{\footnotesize
\begin{center}
\begin{tabular}{|ll||l|l||l|l|}
\hline
\raisebox{-0.5ex}{$m$} &  \raisebox{-0.5ex}{$n$} & \raisebox{-0.5ex}{$D^{1,1}_{m,n}$} & \raisebox{-0.5ex}{$D^{1,1}_{m,n}$} & \raisebox{-0.5ex}{$D^{0,1}_{m,n}$} & \raisebox{-0.5ex}{$D^{0,1}_{m,n}$} \\ [1ex]
\hline
 \raisebox{-0.5ex}{$0$} &\raisebox{-0.5ex}{$0$}
&\raisebox{-0.5ex}{$1$}&\raisebox{-0.5ex}{$\frac{1}{4} H_1 - 6 H_0$} 
&\raisebox{-0.5ex}{$-\frac{3}{2}$}&\raisebox{-0.5ex}{$-\frac{1}{2} \,  \mathcal{R}_1 + 13 \, \mathcal{R}_0$} 
\\ [1.5ex]
 $0$ & $2$
&$\frac{3}{16}$
&$\frac{49}{64} \, H_2 -  \frac{9}{4} \, H_1 +  \frac{2133}{64} \, H_0$
&$\phantom{-}1$&$-2 \, \mathcal{R}_2 + 7 \, \mathcal{R}_1 -30 \, \mathcal{R}_0$\\ [1ex]
$1$ & $1$
&$\frac{5}{16}$    
&$\frac{7}{64} \, H_2 -  \frac{1}{4}  \, H_1 +   \frac{195}{64}  \, H_0$
&$-1$&$-\frac{1}{4} \, \mathcal{R}_2 + \frac{1}{2} \, \mathcal{R}_1 + 6 \, \mathcal{R}_0$\\[1ex]
 $2$ & $0$
&$\frac{19}{16}  $    
&$\frac{1}{64} \, H_2 +  \frac{1}{4}  \, H_1 -   \frac{411}{64} \,  H_0$
&$-\frac{13}{8}$&$-\frac{1}{32} \, \mathcal{R}_2 - \frac{7}{16} \, \mathcal{R}_1 + \frac{55}{4} \, \mathcal{R}_0$\\ [1ex]
\hline
 \end{tabular}
\end{center}
}
As evidence for their conjecture
in the case of $X=\mathbb{C}\mathrm{P}^2$ and the gauge group $\mathrm{SU}(2)$, Moore and Witten \cite{MooreWitten} computed the first 40 invariants $D^{0,1}_{m,n}$ and found them to be in agreement 
with the results of Kotschick and Lisca \cite{KotschickLisca} and Ellingsrud and G\"ottsche \cite{EllingsrudGoettsche} for the Donaldson invariants.
However, Conjecture \ref{conjecture_light} remains open. 
The main result in \cite{MalmendierOno} concerned the case of the $\mathrm{SO}(3)$-gauge theory.
The following theorem was proved:

\begin{theorem}\label{MainTheorem_MO}
Conjecture \ref{MooreWitten} is true for the gauge group $\mathrm{SO}(3)$, $X=\mathbb{C}\mathrm{P}^2$.
\end{theorem}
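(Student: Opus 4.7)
The plan is to show that the formula for $Z_{\tau=\infty}$ from Theorem \ref{evaluation1} specialized to $a=b=1$ agrees term by term with the generating function for the $\mathrm{SO}(3)$-Donaldson invariants of $\mathbb{C}\mathrm{P}^2$. Since $b_2^-(\mathbb{C}\mathrm{P}^2)=0$, the shortcut described in the Remark after the wall-crossing theorem (comparing two chambers separated by a wall and invoking wall-crossing agreement) is unavailable: the positive cone is one-dimensional and the period point $\omega=\mathrm{H}$ lies in the unique chamber. The matching must therefore be performed directly in that chamber.

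First I would expand $Z_{\tau=\infty}$ using Theorem \ref{evaluation1}, so that the coefficients $D^{1,1}_{m,n}$ are exhibited as constant coefficients of weakly holomorphic modular forms in which the only non-elementary input is the holomorphic part $Q^+_{11}(\tau)$ of Zwegers' weight $1/2$ harmonic Maass form (whose shadow is $\eta^3(\tau)$) processed through the operators $\mathcal{E}^k$ of Lemma \ref{EkQ}. On the Donaldson side I would build the generating function out of the Fintushel-Stern blowup formula together with G\"ottsche's wall-crossing formula, starting from a trivial chamber in an auxiliary geometric setup and accumulating the contributions of all walls crossed en route to the chamber containing $\omega=\mathrm{H}$. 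Each such contribution is indexed by a characteristic class $\lambda\in H^2(\mathbb{C}\mathrm{P}^2,\mathbb{Z})+\lambda_0$ with $\lambda^2<0$ and has the closed form (\ref{wcross}).

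The key identification, carried out in \cite{MalmendierOno}, is to recognize the sum of these wall-crossing contributions as the $q^0$-coefficient of $R^{1,1}_{mnk}\cdot\mathcal{E}^k[Q^+_{11}]$. After unpacking (\ref{wcross}) one obtains a Lambert-type series of Appell-Lerch shape, whose modular completion by a period integral of $\eta^3(\tau)$ produces exactly Zwegers' harmonic Maass form $Q_{11}=Q^+_{11}+Q^-_{11}$. Because the non-holomorphic part $Q^-_{11}$ decays exponentially, taking constant coefficients kills its contribution and leaves only $Q^+_{11}$, which matches the $u$-plane side coefficient-by-coefficient through the identity $2\eta^3=\vartheta_2\vartheta_3\vartheta_4$ and the defining properties of $\mathcal{E}^k$.

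The main obstacle is the third step: identifying the Appell-Lerch series produced by the Donaldson wall-crossing sum with $Q^+_{11}$. The characteristic classes relevant for $\mathrm{SO}(3)$ have half-integral norm, so the resulting series is not a classical theta function, and its modular completion requires the full machinery of Zwegers' mock theta functions. The proof relies on the explicit construction of $Q_{11}$ in \cite{MalmendierOno} together with the auxiliary identity that $\eta^{-1}(\tau)\cdot(Q_{11}(q)+4M(q))$ is a modular function, which is what ultimately pins down the normalization and the precise coefficients $H_l$ appearing in (\ref{Qplus}).
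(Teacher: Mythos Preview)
The paper does not actually prove this theorem: it is quoted as the main result of \cite{MalmendierOno} and no argument is given in the present article. There is thus no ``paper's own proof'' to compare your proposal against; at best one can assess whether your outline is a plausible reconstruction of what \cite{MalmendierOno} does.

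Your sketch contains an internal inconsistency that would have to be repaired before it becomes a proof outline. You correctly observe that $b_2^-(\mathbb{C}\mathrm{P}^2)=0$, so the intersection form on $H^2(\mathbb{C}\mathrm{P}^2,\mathbb{Z})$ is positive definite and there are no walls. But in the next paragraph you propose ``accumulating the contributions of all walls crossed en route to the chamber containing $\omega=\mathrm{H}$'', each ``indexed by a characteristic class $\lambda\in H^2(\mathbb{C}\mathrm{P}^2,\mathbb{Z})+\lambda_0$ with $\lambda^2<0$''. There are no such $\lambda$: for $\lambda_0=\tfrac{1}{2}\mathrm{H}$ every $\lambda=(n+\tfrac{1}{2})\mathrm{H}$ has $\lambda^2=(n+\tfrac{1}{2})^2>0$. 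The wall-crossing sum you describe is empty, and no series ``of Appell--Lerch shape'' is produced on the Donaldson side by this route.

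If your intent is instead to pass to an auxiliary blowup $\widehat{\mathbb{C}\mathrm{P}^2}$ (where $b_2^-=1$ and walls do exist), then the relevant classes $\lambda$ live in $H^2(\widehat{\mathbb{C}\mathrm{P}^2},\mathbb{Z})+\widehat{\lambda}_0$, not in $H^2(\mathbb{C}\mathrm{P}^2)$, and you must say explicitly which limiting chamber on the blowup has vanishing invariants, which walls are crossed, and how the blowup relation (\ref{blowup_SO3}) transports the resulting identity back to $\mathbb{C}\mathrm{P}^2$. As written, the mechanism by which the Donaldson side is supposed to produce an Appell--Lerch sum --- the object you want to identify with $Q^+_{11}$ --- is missing. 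That identification is indeed the heart of \cite{MalmendierOno}, but your outline has not yet manufactured the input to it.
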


\subsection{The blowup of the projective plane}
\label{blowup}
Using the notation of the previous section
we will write down an explicit formula for Donaldson invariants of the connected sum $\widehat{\mathbb{C}\mathrm{P}^2}=\mathbb{C}\mathrm{P}^2 \# \,\overline{\mathbb{C}\mathrm{P}^2}$. 
Let $\widehat{\omega} = \mathrm{H}+=\mathrm{H} - \epsilon \, \mathrm{E}$ with $0< \epsilon \ll 1$ 
be the period point of the metric on the blowup. 
The cycles $\{\mathrm{h}, \mathrm{e}\}$ form a basis of the two-cycles of the blowup. We set $\widehat{S}=\kappa \, \mathrm{h} + \mu \, \mathrm{e}$ such that $\widehat{S}^2=\kappa^2 - \mu^2$. 
We also denote the integral lifting of $w_2(\widehat{\mathbb{C}\mathrm{P}^2})$ by $\widehat{w}_2= b \, (-\mathrm{H} + \mathrm{E})$ with $b=1$.
\begin{lemma}
On $\widehat{X}=\widehat{\mathbb{C}\mathrm{P}^2}$ let $\widehat{\omega}= \mathrm{H}+$
be the period point of the metric. Let $2\,\widehat{\lambda}_0=a \, (\mathrm{H} - \mathrm{E})$ with $a \in \{0,1\}$ be an integral lifting of $w_2(\widehat{E})$ and $b=1$. 
For $(\widehat{X},\widehat{\omega},\widehat{\lambda}_0,\widehat{w}_2=b \, (-\mathrm{H} + \mathrm{E}))$
the Siegel-Narain theta function is
\begin{equation}
\label{siegnar_CP2_bu}
\begin{split}
\widehat{\bar \Theta}_{ab}  = (-1)^{ab} \; \exp{\left(\frac{\pi}{2\, y} \, \bar{\xi}_+^2+\frac{1}{8\pi \, y \, h^2} \, \mu^2\right)} \; \overline{\vartheta_{ab}\Big((\xi_+,\mathrm{H}) \Big| \tau\Big)}\; \vartheta_{ab}\left(\left.\frac{\mu}{2\pi h}\right|\tau\right) 
\end{split}
\end{equation}
where $h$ was defined in (\ref{qforms}).
\end{lemma}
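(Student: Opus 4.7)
My strategy is to evaluate the Siegel--Narain theta function in \eqref{siegnar} directly, exploiting the orthogonal direct sum splitting $H^2(\widehat X,\mathbb{Z})=H^2(\mathbb{C}\mathrm{P}^2,\mathbb{Z})\oplus \mathbb{Z}\,\mathrm{E}$ inherited from the blowup, together with the fact that in the limit $\epsilon\to 0$ the period point $\widehat\omega=\mathrm{H}-\epsilon\,\mathrm{E}$ has $\mathrm{H}$ as its self-dual direction and $\mathrm{E}$ as its anti-self-dual direction.

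First, I parametrize $\lambda\in H^2(\widehat X,\mathbb{Z})+\widehat\lambda_0$ as $\lambda=(n+\tfrac{a}{2})\,\mathrm{H}+(m-\tfrac{a}{2})\,\mathrm{E}$ with $(n,m)\in\mathbb{Z}^2$. The splitting then diagonalizes everything simultaneously: $\lambda_+^2=(n+a/2)^2$, $\lambda_-^2=-(m-a/2)^2$, and with $\widehat S=\kappa\,\mathrm{H}+\mu\,\mathrm{E}$ one has $\bar\xi_+=\rho y h\,\mathrm{H}$ and $\bar\xi_-=(\mu/(2\pi h))\,\mathrm{E}$, so that $\bar\xi_+^2=(\bar\xi_+,\mathrm{H})^2$, $\bar\xi_-^2=-\mu^2/(4\pi^2 h^2)$, $(\lambda,\bar\xi)=(n+a/2)(\bar\xi_+,\mathrm{H})-(m-a/2)\mu/(2\pi h)$, and $(\lambda,\widehat w_2)=-b(n+m)$. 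Substituting these into the exponential prefactor of \eqref{siegnar} immediately produces both factors appearing in the claimed formula, including the new non-holomorphic term $\mu^2/(8\pi y h^2)$ coming from $-\bar\xi_-^2$.

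The lattice sum then factorizes into an $n$-sum and an $m$-sum. Since the $n$-sum involves only $\bar\tau$ (through $-i\pi\bar\tau\,\lambda_+^2$) while the $m$-sum involves only $\tau$ (through $-i\pi\tau\,\lambda_-^2=+i\pi\tau(m-a/2)^2$), comparison with the definition $\vartheta_{ab}(v|\tau)=\sum_n q^{(2n+a)^2/8}\,e^{\pi i(2n+a)(v+b/2)}$ identifies the $n$-sum with $\overline{\vartheta_{ab}((\bar\xi_+,\mathrm{H})|\tau)}$ and the $m$-sum with $\vartheta_{ab}(\mu/(2\pi h)|\tau)$, after a routine relabeling of the summation index.

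The main obstacle is the careful tracking of the overall sign $(-1)^{ab}$, which is nontrivial only when $a=b=1$. In this case, matching the factor $-b(n+m)$ arising from $(\lambda,\widehat w_2)$ to the $b/2$ argument in $\vartheta_{1b}$ produces a factor of $i^b$ from each of the two theta functions (via $e^{\pi ib/2}$ in the definition), and a further sign $(-1)^b$ appears from the index shift $m\mapsto m-1$ needed to bring the $m$-sum into the standard form $q^{(2m+1)^2/8}$; the product of these three contributions collapses to $(-1)^{ab}$. The rest of the proof is then a straightforward collection of the pieces.
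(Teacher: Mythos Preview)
Your approach is correct and is essentially the paper's: both exploit the orthogonal splitting $H^2(\widehat X,\mathbb{Z})=\mathbb{Z}\,\mathrm{H}\oplus\mathbb{Z}\,\mathrm{E}$ (with $\mathrm{H}$ self-dual and $\mathrm{E}$ anti-self-dual in the limit $\epsilon\to 0$) to factorize the Siegel--Narain sum into a product of two one-variable theta sums. The paper is simply terser about the sign: it identifies the $\mathrm{E}$-sum directly as $\vartheta_{a(-b)}\bigl(\mu/(2\pi h)\,\big|\,\tau\bigr)$ and then applies the parity identities
\[
\vartheta_{a(-b)}(v\mid\tau)=\vartheta_{ab}(-v\mid\tau)=(-1)^{ab}\,\vartheta_{ab}(v\mid\tau),
\qquad \vartheta_{(-a)b}=\vartheta_{ab},
\]
to obtain the factor $(-1)^{ab}$ in one stroke, rather than tracking phases through index shifts.

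There is, however, an arithmetic slip in your sign bookkeeping. As you describe it, the three contributions for $a=1$ are $i^{b}$, $i^{b}$, and $(-1)^{b}$, whose product is $i^{2b}(-1)^{b}=(-1)^{2b}=1$, not $(-1)^{b}=(-1)^{ab}$. What actually happens is that the $n$-sum equals $e^{\pi i b/2}\,\overline{\vartheta_{1b}}$, while the $m$-sum, after your shift $m\mapsto m-1$, equals $e^{-\pi i b}\cdot e^{-\pi i b/2}\,\vartheta_{1b}$; the phases $e^{\pm\pi i b/2}$ cancel and only the shift factor $e^{-\pi i b}=(-1)^{b}$ survives. The discrepancy comes from the fact that one of your theta factors is complex-conjugated, so its built-in $e^{\pi i b/2}$ becomes $e^{-\pi i b/2}$ rather than a second $i^{b}$. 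With this correction your argument goes through; alternatively, adopting the paper's parity-identity viewpoint avoids the delicate phase tracking altogether.
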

\begin{proof}
Evaluating the Siegel-Narain theta function (\ref{siegnar}) for $\widehat{\mathbb{C}\mathrm{P}^2}$
gives an additional factor of
\begin{equation}
 \vartheta_{a(-b)}\left(\left.\frac{\mu}{2\pi h}\right|\tau\right) =
 \vartheta_{ab}\left(\left.-\frac{\mu}{2\pi h}\right|\tau\right) =
 (-1)^{ab} \, \vartheta_{ab}\left(\left.\frac{\mu}{2\pi h}\right|\tau\right) 
 \;.
\end{equation}
Finally, the result follows from $\vartheta_{(-a)b}(v|\tau)=\vartheta_{ab}(v|\tau)$.
\end{proof}
\noindent
The following is a reformulation of Theorems \ref{blowupSO3}, \ref{blowupSU2}, and \ref{evaluation2}
applied to the blowup of the projective plane:
\begin{corollary}
\label{evaluation2}
On $\widehat{X}= \widehat{\mathbb{C}\mathrm{P}^2}$
let $\widehat{\omega}= \mathrm{H}+$ be the period point of the metric. Let
$2\,\widehat{\lambda}_0=a \, (\mathrm{H} -  \mathrm{E})$ with $a \in \{0,1\}$ be an integral lifting of $w_2(\widehat{E})$ and $b=1$. For $(\widehat{X},\widehat{\omega},\widehat{\lambda}_0,\widehat{w}_2=b \, (-\mathrm{H} + \mathrm{E}))$ the $u$-plane integral in the variables $p \, \mathrm{x} \in H_0(\widehat{X}, \mathbb{Z})$ and $\widehat{S}=\kappa \,\mathrm{h} + \mu \, \mathrm{e}\in H_2(\widehat{X}, \mathbb{Z})$ is
\begin{equation}
\label{generating_function}
 Z=Z_{\tau=\infty}
  = \sum_{m,n \in \mathbb{N}_0} \dfrac{p^m}{m!} \dfrac{\kappa^{2n-ab+1}}{(2n-ab+1)!} \; \widehat{D}^{\,ab}_{mn} \;
\end{equation}
where
\begin{equation}
\label{coefficients2}
\widehat{D}^{\,ab}_{mn} =  \sum_{k=0}^n  \left[ R^{\,ab}_{mnk} \; 
e^{-\mu^2 \, T} \; \frac{\vartheta_{ab}\left(\frac{\mu}{2\pi h} \Big| \tau \right)}{\vartheta_4(\tau)}
\; \mathcal{E}^k[Q_{ab}^+(\tau)] \right]_{q^0}
\end{equation}
and $T$, $R^{\,ab}_{mnk}$, and $\mathcal{E}^k[Q_{ab}(\tau)]$ were defined in (\ref{qforms}),
(\ref{R_mnk}), and (\ref{EkQab}) respectively.
\end{corollary}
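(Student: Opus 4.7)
The plan is to reduce Corollary \ref{evaluation2} to Theorem \ref{evaluation1} by isolating the extra factors that appear in the $u$-plane integrand when one passes from $\mathbb{C}\mathrm{P}^2$ to its blowup. Reading off from (\ref{nwcnvpione}), (\ref{siegnar_CP2_bu}), and the identity $\widehat{S}^2=\kappa^2-\mu^2$, three changes occur: the signature factor $\vartheta_4^\sigma$ in $\widehat f$ drops from $\vartheta_4$ to $1$; the exponential $e^{S^2\widehat T}$ acquires the extra factor $e^{-\mu^2\widehat T}=e^{-\mu^2 T}\cdot\exp(-\mu^2/(8\pi y h^2))$; and $\widehat{\bar\Theta}_{ab}$ acquires the extra factor $(-1)^{ab}\exp(\mu^2/(8\pi y h^2))\,\vartheta_{ab}(\mu/(2\pi h)|\tau)$. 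The two anti-holomorphic Gaussians in $\mu^2/y$ cancel, so the net modification of the integrand is the genuinely holomorphic factor
\begin{equation*}
e^{-\mu^2 T}\;\frac{\vartheta_{ab}\!\left(\tfrac{\mu}{2\pi h}\big|\tau\right)}{\vartheta_4(\tau)},
\end{equation*}
up to a sign that is bookkept against the $(-1)^{ab+1}$ already sitting inside $R^{\,ab}_{mnk}$. As a cross-check, exactly this holomorphic factor emerges from the blowup formulas in Theorems \ref{blowupSO3} and \ref{blowupSU2} once one writes $\sigma_j$ in terms of $\vartheta_{ab}$ via the identity $\sigma_j(z)=\exp(\pi^2 E_2 z^2/(24\underline\omega^2))\,\vartheta_{ab}(v|\tau)/\vartheta_{ab}(\tau)$ and recombines the exponent $E_2\mu^2/(24h^2)-u\mu^2/3$ into $-\mu^2 T$ using $T=-(E_2/h^2-8u)/24$.

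With the modification purely holomorphic, the anti-holomorphic dependence of the integrand still resides solely in $\overline{\vartheta_{ab}(0|\tau)}$ and in powers of $y$, so the regularization procedure used in Theorem \ref{evaluation1} applies verbatim. Concretely, expanding the new prefactor as a power series in $p$, $\kappa$, and $\mu$, the coefficient of $p^m\kappa^{2n-ab+1}$ is the $\mathbb{C}\mathrm{P}^2$ integrand of Theorem \ref{evaluation1} multiplied by $e^{-\mu^2 T}\vartheta_{ab}(\mu/(2\pi h)|\tau)/\vartheta_4(\tau)$. Lemma \ref{EkQ} then rewrites $y^{-3/2}[\overline{\vartheta_{ab}(0|\tau)}-4y\overline{\vartheta_{ab}'(0|\tau)}]$ as the $\bar\tau$-derivative of $\mathcal{E}^k[Q_{ab}(\tau)]$; integration by parts collapses the double integral onto the contour $y\to\infty$, $x\in[0,4]$, and extracts the $q^0$-coefficient, while the non-holomorphic part $Q_{ab}^-$ decays away by (\ref{decay}). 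What remains is exactly the constant coefficient of $R^{\,ab}_{mnk}\,e^{-\mu^2 T}\,\vartheta_{ab}(\mu/(2\pi h)|\tau)\,\mathcal{E}^k[Q_{ab}^+(\tau)]/\vartheta_4(\tau)$, which is the definition (\ref{coefficients2}) of $\widehat{D}^{\,ab}_{mn}$.

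The main delicate step is the cancellation of the two anti-holomorphic $\mu^2/y$ Gaussians: if it failed, the blowup modification would not be holomorphic in $\tau$ and the integration-by-parts mechanism of \cite{MalmendierOno} would not apply. A secondary bookkeeping issue is the correct handling of the overall $(-1)^{ab}$ sign, which is most transparently fixed by comparing the Siegel-Narain-based derivation against the Weierstrass-sigma form provided by Theorems \ref{blowupSO3} and \ref{blowupSU2}.
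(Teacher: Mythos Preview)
Your approach is correct and coincides with the paper's: the corollary is presented there simply as a reformulation of Theorems~\ref{blowupSO3}, \ref{blowupSU2} and \ref{evaluation1}, and you have unpacked precisely how that reformulation works, in particular isolating the key point that the two factors $\exp(\pm\mu^2/(8\pi y h^2))$ coming from $e^{-\mu^2\widehat T}$ and from the $\bar\xi_-$-prefactor in (\ref{siegnar_CP2_bu}) cancel, leaving a holomorphic modification of the $\mathbb{C}\mathrm{P}^2$ integrand so that Lemma~\ref{EkQ} still applies.

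One small correction on the sign bookkeeping: the extra $(-1)^{ab}$ from (\ref{siegnar_CP2_bu}) is \emph{not} absorbed by the $(-1)^{ab+1}$ in $R^{\,ab}_{mnk}$, since that factor already appears identically in both (\ref{coefficients}) and (\ref{coefficients2}) and therefore cannot swallow an additional sign. Your fallback---fixing the overall sign by matching against the Weierstrass $\sigma$-function form of the blowup factor in Theorems~\ref{blowupSO3} and \ref{blowupSU2}---is the correct way to pin it down.
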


\noindent 
The $u$-plane integral agrees with the Donaldson invariants and also satisfies
the same blowup formulas (\ref{blowup_SO3}) and (\ref{blowup_SU2}) and the same
wall-crossing formula (\ref{wcross}). Thus, it follows immediately:
\begin{corollary}\label{MW_CP2bu}
Conjecture \ref{MooreWitten} is true for the gauge group $\mathrm{SO}(3)$, $X=\widehat{\mathbb{C}\mathrm{P}^2}$.
\end{corollary}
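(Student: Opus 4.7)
The plan is to deduce the corollary from three inputs that have already been assembled in the preceding sections: Theorem \ref{MainTheorem_MO}, which establishes Conjecture \ref{MooreWitten} for $X = \mathbb{C}\mathrm{P}^2$ in the $\mathrm{SO}(3)$ case; the blowup formula of Theorem \ref{blowupSO3} for the $u$-plane integral; and the wall-crossing formula (\ref{wcross}). The strategy is first to obtain the equality of the $\tau=\infty$ contribution and the Donaldson generating function in the one specific chamber of $\widehat{\mathbb{C}\mathrm{P}^2}$ determined by $\widehat{\omega} = \mathrm{H} - \epsilon\,\mathrm{E}$, and then to propagate this equality to every other chamber using matching wall-crossing behaviors.

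For the first step I would fix the period point $\widehat{\omega} = \mathrm{H} - \epsilon\,\mathrm{E}$ with $0 < \epsilon \ll 1$, which is precisely the data in which the blowup formula of Theorem \ref{blowupSO3} applies. By that theorem, the $\tau=\infty$ contribution to the $u$-plane integral on $\widehat{\mathbb{C}\mathrm{P}^2}$ with data $(\widehat{\lambda}_0, \widehat{w}_2) = (\lambda_0 + \tfrac{1}{2}\mathrm{E},\, w_2 + \mathrm{E})$ equals the $\tau=\infty$ contribution on $\mathbb{C}\mathrm{P}^2$ multiplied by the blowup factor $\frac{1}{2\sqrt{2}}\,e^{-u\mu^2/3}\,\sigma(2\sqrt{2}\,\mu)$. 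The remark following Theorem \ref{blowupSO3} identifies this factor, together with the series expansion (\ref{ws}), with the Fintushel--Stern blowup formula for the $\mathrm{SO}(3)$-Donaldson invariants. Theorem \ref{MainTheorem_MO} then allows me to replace the $\mathbb{C}\mathrm{P}^2$ $u$-plane contribution by the corresponding $\mathrm{SO}(3)$-Donaldson generating function on $\mathbb{C}\mathrm{P}^2$, and combining these two blowup relations yields the desired identification on $\widehat{\mathbb{C}\mathrm{P}^2}$ in the chamber containing $\widehat{\omega}$.

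To extend the identification from this single chamber to every other chamber, I would invoke the wall-crossing formula (\ref{wcross}). The remark following that theorem records that the $\tau=\infty$ wall-crossing term for the $u$-plane integral coincides with G\"ottsche's wall-crossing formula for the Donaldson invariants on manifolds with $b_2^+ = 1$ and $b_2^- \geq 1$, a class that includes $\widehat{\mathbb{C}\mathrm{P}^2}$ since $b_2^-(\widehat{\mathbb{C}\mathrm{P}^2}) = 1$. Because two functions on the space of chambers which agree in one chamber and transform identically across every wall must agree globally, the equality then propagates to the entire positive cone $H^2(\widehat{\mathbb{C}\mathrm{P}^2},\mathbb{R})^+$, proving the corollary.

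The only subtle step is the bookkeeping: one must verify that the integral liftings $(\widehat{\lambda}_0, \widehat{w}_2)$ produced by the blowup formula of Theorem \ref{blowupSO3} match the lifts used implicitly in the Fintushel--Stern blowup formula, and that the overall normalization constants in (\ref{blowup_SO3}) and in \cite{FintushelStern} match exactly. Beyond this verification the argument is a direct synthesis of Theorem \ref{MainTheorem_MO}, Theorem \ref{blowupSO3}, and the wall-crossing theorem, which is why the paper justifiably frames this result as an immediate corollary.
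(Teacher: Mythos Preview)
Your proposal is correct and follows essentially the same approach as the paper: the paper's justification is the single sentence preceding the corollary, which says that the $u$-plane integral agrees with the Donaldson invariants (on $\mathbb{C}\mathrm{P}^2$, by Theorem~\ref{MainTheorem_MO}) and that both satisfy the same blowup formula~(\ref{blowup_SO3}) and wall-crossing formula~(\ref{wcross}), whence the equality transfers to $\widehat{\mathbb{C}\mathrm{P}^2}$ in every chamber. You have simply unpacked this sentence in detail.
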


\begin{remark}
We remark that Equations (\ref{generating_function}) and (\ref{coefficients}) also make sense for $b=0$.
For $b=0$, they are the evaluation of the $u$-plane integral for the Siegel-Narain theta function
for the non-geometric lattice
$H^2 = \lbrace N_1 \, \mathrm{H} + N_2 \, \mathrm{E} \, |\,  N_1, N_2 \in \mathbb{N}_0 \rbrace $ together with
$2 \, \lambda_0 = a \, (\mathrm{H} - \mathrm{E})$, $w_2 = b \, (-\mathrm{H} + \mathrm{E})$, 
$\omega = \mathrm{H} - \epsilon \mathrm{E}$.
\end{remark}

\subsection{$\mathbb{C}\mathrm{P}^1 \times \mathbb{C}\mathrm{P}^1$ in limiting chambers}
\label{unfolding_technique}

On $X=\mathbb{C}\mathrm{P}^1 \times \mathbb{C}\mathrm{P}^1$ we will use 
the formal variable $S=2\,\kappa_f \, \mathrm{f} + \kappa_g \, \mathrm{g}$ with $S^2=4\, \kappa_f\kappa_g$ in the $u$-plane integral. $X$ has $\sigma=0$ and is spin, hence $w_2(X)=0$. We choose the period point
\begin{equation}
 \omega = \frac{1}{\sqrt{2} \, \epsilon} \, \mathrm{F} + \frac{\epsilon}{\sqrt{2}} \, \mathrm{G} \;
 \text{such that} \; \omega^2 = 1\;. 
\end{equation}
First, we will describe the Donaldson invariants in the limiting chamber $\epsilon \to 0$
which corresponds to a small volume of $\mathrm{F}$ since $\int_\mathrm{f} \omega = 
\sqrt{2} \, \epsilon$. As explained in \cite{MooreWitten} the metric 
has positive scalar curvature in this chamber and hence
\begin{equation}
Z_{\tau=0} = Z_{\tau=2} =0 \;.
\end{equation}
Thus, the evaluation of the $u$-plane integral in this chamber amounts to evaluating the cusp contribution
at $\tau=\infty$. As explained in \cite{MooreWitten} the $u$-plane integral can be evaluated by a general
strategy due to Borcherds, called lattice reduction method or unfolding technique. The evaluation was described in great detail in \cite{LozanoMarino}. The unfolding technique yields the following results:
\begin{theorem}
\label{evaluation3}
On $X=\mathbb{C}\mathrm{P}^1 \times \mathbb{C}\mathrm{P}^1$
let $\omega= \mathrm{F}+$ 
be the period point of the metric. Let
$2\,\lambda_0=\rho_f \, \mathrm{F} + \rho_g \, \mathrm{G}$ with $\rho_f, \rho_g 
\in \{0,1\}$ be an integral lifting of $w_2(E)$. 
For $(X,\omega,\lambda_0,w_2=0)$ 
the $u$-plane integral in the variables $p \, \mathrm{x} \in H_0(X, \mathbb{Z})$ and
$S=\kappa_f \, \mathrm{f} + 2 \, \kappa_g \, \mathrm{g} \in H_2(X, \mathbb{Z})$ is 
\begin{equation}
\label{unfolding}
 Z=Z_{\tau=\infty}= \left\lbrace \begin{array}{ll} 
 - \frac{1}{4} \left\lbrack \frac{1}{h \,\cdot f_2} \; e^{2 \, p \, u + S^2 \,T} \;
 \cot\left( \frac{\kappa_g}{h}\right)\right\rbrack_{q^0} & \text{if $\rho_g =0, \rho_f=0$}\\
  - \frac{1}{4} \left\lbrack \frac{1}{h \,\cdot f_2} \; e^{2 \, p \, u + S^2 \,T} \;
 \csc\left( \frac{\kappa_g}{h}\right)\right\rbrack_{q^0} & \text{if $\rho_g =0, \rho_f=1$}\\[2ex]
 \phantom{-}0 & \text{if $\rho_g\not = 0$}
\end{array} \right. 
\end{equation}
where $h, f_2, T, u$ were defined in (\ref{qforms}).
\end{theorem}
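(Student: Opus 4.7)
The plan is to follow the Borcherds-style unfolding (lattice reduction) technique applied by Moore and Witten and further developed in \cite{LozanoMarino}, specialized to the degenerating period point. First I would substitute the data of $X=\mathbb{C}\mathrm{P}^1\times\mathbb{C}\mathrm{P}^1$ into (\ref{newcpione}): signature $\sigma=0$, $w_2=0$ (since $X$ is spin), and the hyperbolic intersection lattice $H^2(X,\mathbb{Z})=\mathbb{Z}\,\mathrm{F}\oplus\mathbb{Z}\,\mathrm{G}$ together with $\omega=\frac{1}{\sqrt{2}\epsilon}\mathrm{F}+\frac{\epsilon}{\sqrt{2}}\mathrm{G}$. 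As already noted in the statement, the positive-scalar-curvature argument of \cite{MooreWitten} gives $Z_{\tau=0}=Z_{\tau=2}=0$, so only the cusp at infinity contributes.

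Second, I would write the Siegel-Narain theta function (\ref{siegnar}) explicitly for $\lambda=n_1\,\mathrm{F}+n_2\,\mathrm{G}$ with $n_1\in\mathbb{Z}+\rho_f/2$ and $n_2\in\mathbb{Z}+\rho_g/2$. A direct calculation using $(\mathrm{F},\mathrm{F})=(\mathrm{G},\mathrm{G})=0$ and $(\mathrm{F},\mathrm{G})=1$ gives $\lambda_-^2-\lambda_+^2=-n_1^2\epsilon^2-n_2^2/\epsilon^2$, so the Gaussian factor in the $\lambda$-sum is $\exp\!\bigl(-\pi y\,(n_1^2\epsilon^2+n_2^2/\epsilon^2)\bigr)$. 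In the limiting chamber $\epsilon\to 0^+$ every term with $n_2\neq 0$ is super-exponentially suppressed. In particular, when $\rho_g\neq 0$ no $n_2=0$ term exists in the shifted lattice $H^2+\lambda_0$, and the regularized integral (\ref{u-plane_integral}) vanishes identically, giving the third case of (\ref{unfolding}).

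Third, for $\rho_g=0$ the Siegel-Narain sum localizes to a one-dimensional sum along the degenerating null direction $\mathrm{F}$, indexed by $n_1\in\mathbb{Z}+\rho_f/2$. Here the unfolding method of Borcherds applies cleanly: the stabilizer of the null vector $\mathrm{F}$ inside the relevant modular group is generated by translation $\tau\mapsto\tau+1$, and unfolding converts the regularized integral over $\Gamma_0(4)\backslash\mathbb{H}$ into an $x$-integral over one period followed by a $y$-integral whose regularized value selects the constant $q$-coefficient. After tracking the insertion $[(S,\omega)+2\,d/d\rho]|_{\rho=0}$ through the degeneration, the residual one-dimensional lattice sum resums via the classical partial-fraction expansions
\begin{equation*}
\pi\cot(\pi z)=\sum_{n\in\mathbb{Z}}\frac{1}{z-n},\qquad \pi\csc(\pi z)=\sum_{n\in\mathbb{Z}}\frac{(-1)^n}{z-n},
\end{equation*}
so that the integer shift $\rho_f=0$ produces $\cot(\kappa_g/h)$ while the half-integer shift $\rho_f=1$ produces $\csc(\kappa_g/h)$, matching the first two cases of (\ref{unfolding}). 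The remaining factor $\frac{1}{h\cdot f_2}\,e^{2pu+S^2T}$ comes directly from $\widehat{f}(p,\kappa)$ in (\ref{nwcnvpione}) once the terms specific to $\bar\Theta$ have been absorbed into the trigonometric factor.

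The main obstacle, I expect, will be the careful control of the regularization in the $\epsilon\to 0$ limit: the naive lattice sum against the collapsing Gaussian is a distribution (a delta comb in $\kappa_g/h$), and one must justify interchanging the $\epsilon$-limit with the regularized modular integral and with the $\rho$-derivative in (\ref{u-plane_integral}) in such a way that the finite trigonometric answer emerges. A secondary but nontrivial bookkeeping issue is keeping the $S_-/(2\pi h)$ component of $\bar\xi$ finite in the limit even though $S_\pm$ individually blow up — this is what feeds $\kappa_g/h$ into the trigonometric functions. Both of these are precisely the manipulations carried out in detail in \cite{LozanoMarino}, whose method I would follow and specialize to the non-standard shift $\lambda_0=\frac{1}{2}(\rho_f\,\mathrm{F}+\rho_g\,\mathrm{G})$.
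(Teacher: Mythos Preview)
Your proposal is correct and follows exactly the route the paper itself points to: the paper does not give an independent proof of this theorem but defers to the Borcherds lattice-reduction/unfolding computation of \cite{MooreWitten} and \cite{LozanoMarino}, which is precisely what you outline. Your identification of the $\epsilon\to 0$ suppression of $n_2\neq 0$ terms (hence the vanishing for $\rho_g\neq 0$), the localization to the null direction $\mathrm{F}$, and the resummation via $\cot$/$\csc$ partial fractions matches that reference computation step for step.
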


\begin{corollary}
\label{MW_CP1xCP1}
Conjecture \ref{MooreWitten} is true for $X=\mathbb{C}\mathrm{P}^1 \times \mathbb{C}\mathrm{P}^1$.
\end{corollary}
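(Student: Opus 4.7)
The plan is to deduce Corollary~\ref{MW_CP1xCP1} from Theorem~\ref{evaluation3} via the wall-crossing reduction outlined in the remark following the wall-crossing theorem. Since $X=\mathbb{C}\mathrm{P}^1\times\mathbb{C}\mathrm{P}^1$ has $b_2^+=1$ and $b_2^-=1\ge 1$, the wall-crossing formula (\ref{wcross}) for the cusp-at-$\infty$ contribution to the $u$-plane integral coincides, term by term across every wall, with G\"ottsche's wall-crossing formula for the Donaldson generating function (valid in this setting under the Kotschick-Morgan conjecture). Consequently, in order to identify $Z_{\tau=\infty}$ with the Donaldson generating function in every chamber, it is enough to exhibit a single period point at which the two are equal, since their difference is a chamber-wise constant that is itself preserved across walls.

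The natural choice is the limiting chamber $\omega=\mathrm{F}+$ of Theorem~\ref{evaluation3}, in which the volume of $\mathrm{f}$ goes to zero and the metric acquires positive scalar curvature. By the Moore-Witten analysis this forces $Z_{\tau=0}=Z_{\tau=2}=0$, so that $Z=Z_{\tau=\infty}$, and the lattice-reduction/unfolding technique produces the closed-form expressions (\ref{unfolding}) involving $\cot(\kappa_g/h)$ and $\csc(\kappa_g/h)$. I would then identify these expressions, coefficient by coefficient, with the Donaldson invariants of $\mathbb{C}\mathrm{P}^1\times\mathbb{C}\mathrm{P}^1$ in the small-volume chamber: for $\rho_g\ne 0$ both sides vanish, which on the Donaldson side follows from the standard positive-scalar-curvature vanishing argument together with the observation that no basic classes contribute in this chamber, while for $\rho_g=0$ the Donaldson invariants of rational ruled surfaces in the small-volume chamber have explicit algebro-geometric descriptions (after Ellingsrud-G\"ottsche and G\"ottsche-Zagier) that match (\ref{unfolding}).

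With agreement established in the limiting chamber, the common wall-crossing formula propagates the equality $Z_{\tau=\infty}=Z_{DW}$ to every other chamber, including the chamber $\omega=\tfrac12\mathrm{F}+\mathrm{G}$ relevant to Theorem~\ref{main_prop}. As an independent consistency check, the $u$-plane integral satisfies the blowup formulas of Theorems~\ref{blowupSO3} and~\ref{blowupSU2}, which agree with the Fintushel-Stern-Leness blowup relations on the Donaldson side, so the equality is compatible with the already-proved case $\widehat{\mathbb{C}\mathrm{P}^2}$ of Corollary~\ref{MW_CP2bu}. The main obstacle I foresee is the second step: the explicit term-by-term matching of the trigonometric generating function (\ref{unfolding}) with the algebro-geometric Donaldson series in the small-volume chamber. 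The cleanest route is probably to show that both sides are uniquely determined by finitely many low-order coefficients together with the shared wall-crossing and blowup structure, reducing the infinite identification to a finite verification rather than attempting a direct series-level comparison.
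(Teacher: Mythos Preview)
Your approach is essentially the same as the paper's: verify the equality $Z_{\tau=\infty}=Z_{DW}$ in the limiting chamber $\omega=\mathrm{F}+$ via Theorem~\ref{evaluation3}, and then propagate to all chambers using that both sides obey the identical wall-crossing formula~(\ref{wcross}). Two points deserve comment.

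First, the step you flag as the ``main obstacle'' is not one. The paper does not attempt any term-by-term matching or finite verification of~(\ref{unfolding}) against an algebro-geometric computation; it simply cites G\"ottsche~\cite{Goettsche}, who already wrote down the Donaldson generating function for rational ruled surfaces in the limiting chamber in exactly this closed form. So the small-volume identification is a literature reference, not a computation you need to carry out.

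Second, there is a genuine (if easily repaired) gap: you leave the Donaldson wall-crossing formula conditional on the Kotschick--Morgan conjecture. As written, your argument would only prove Corollary~\ref{MW_CP1xCP1} modulo that conjecture. The paper closes this by invoking G\"ottsche--Nakajima--Yoshioka~\cite{GoettscheNakajimaYoshioka}, who removed the Kotschick--Morgan assumption from G\"ottsche's wall-crossing formula. You should add this citation; otherwise your proof is incomplete. The blowup consistency check you include is correct but not needed for the argument.
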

\begin{proof}
The $u$-plane integrals (\ref{unfolding}) of $\mathbb{C}\mathrm{P}^1 \times \mathbb{C}\mathrm{P}^1$ in the 
limiting chamber $\omega=\mathrm{F}+$ agree precisely with the formulas computed by G\"ottsche for the 
Donaldson invariants in \cite{Goettsche}. Moreover, Moore and Witten showed that the $u$-plane integral satisfies the wall-crossing formula in Equation (\ref{wcross}) which
agrees with the wall-crossing formula for the Donaldson invariants derived in \cite{Goettsche} under the assumption of the Kotschick-Morgan conjecture. The assumption of the Kotschick-Morgan conjecture was later
removed in \cite{GoettscheNakajimaYoshioka}.
\end{proof}

If we choose the period point $\omega =   \frac{1}{2} \, \mathrm{F} +  \mathrm{G}$ on
$\mathbb{C}\mathrm{P}^1 \times \mathbb{C}\mathrm{P}^1$ then the unfolding technique 
used to obtain the results for the $u$-plane integral stated in Chapter \ref{unfolding_technique} 
can no longer be applied. In principle, one could still
use the wall-crossing formula (\ref{wcross}) to relate the results from Theorem \ref{evaluation3} for the
period point $\omega = \mathrm{F}+$ to the $u$-plane integral for the period point $\omega = \frac{1}{2} \,   \mathrm{F} +  \mathrm{G}$. However, since an infinite
number of walls needs to be crossed in the process it is difficult to obtain an explicit formula in this way. Instead we will use a different approach. The insertion of a $\mathbb{Z}_2$-delta function
into the Siegel-Narain theta function (\ref{siegnar}) on $\mathbb{C}\mathrm{P}^1 \times \mathbb{C}\mathrm{P}^1$ allows an evaluation of the $u$-plane integral using the method of Chapter \ref{blowup}. We will show:
\begin{proposition}
\label{prop1}
On $X=\mathbb{C}\mathrm{P}^1 \times \mathbb{C}\mathrm{P}^1$
let $\omega=  \frac{1}{2} \, \mathrm{F} +  \mathrm{G}$ be the period point of the metric.
For $(X,\omega,\lambda_0=0,w_2=0)$ the $u$-plane integral in the variables 
$p \, \mathrm{x} \in H_0(X, \mathbb{Z})$, $S=\kappa_f \, \mathrm{f} + 2 \, \kappa_g \, \mathrm{g} \in H_2(X, \mathbb{Z})$
is
\begin{equation}
\label{generating_function2}
 Z_{\tau=\infty}
  =  \frac{1}{2} \, \sum_{a,b \,\in \lbrace 0,1\rbrace}
  \; \sum_{m,n \in \mathbb{N}_0} \dfrac{p^m}{m!} \;\dfrac{\kappa^{2n-ab+1}}{(2n-ab+1)!} \; (-1)^{ab} \; \widehat{D}^{\,ab}_{mn} \;
\end{equation}
where $\widehat{D}^{\,ab}_{mn}$ was defined in (\ref{coefficients2}), and
$\kappa= \kappa_f + \kappa_g$, $\mu=-\kappa_f + \kappa_g$.
\end{proposition}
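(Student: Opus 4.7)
The plan is to show that, in the chamber $\omega = \tfrac{1}{2}\mathrm{F} + \mathrm{G}$, the Siegel-Narain theta function for $\mathbb{C}\mathrm{P}^1 \times \mathbb{C}\mathrm{P}^1$ equals the weighted sum $\tfrac{1}{2}\sum_{a,b}(-1)^{ab}\widehat{\bar\Theta}_{ab}$ of the four Siegel-Narain theta functions appearing in Subsection~\ref{blowup} for $\widehat{\mathbb{C}\mathrm{P}^2}$. Once this identity is established at the level of integrands, Corollary~\ref{evaluation2} immediately supplies the four cusp contributions $\widehat{D}^{\,ab}_{mn}$, and the statement follows. The key observation is that the rank-two hyperbolic lattice $H^2(\mathbb{C}\mathrm{P}^1 \times \mathbb{C}\mathrm{P}^1,\mathbb{Z})$, diagonalized by the splitting along $\omega$ and $\omega^\perp = -\tfrac{1}{2}\mathrm{F} + \mathrm{G}$ (so that $(\omega^\perp)^2 = -1$), maps onto a sublattice $\Lambda^* \subset \tfrac{1}{2}\mathbb{Z}^2$ which decomposes into four cosets indexed by theta-characteristics $(a,b) \in \{0,1\}^2$; this is the ``sum over spin structures on the torus'' alluded to in the introduction.

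Concretely, writing $\lambda = n_f \mathrm{F} + n_g \mathrm{G}$ and setting $c = (\lambda,\omega) = n_f + n_g/2$, $d = (\lambda,\omega^\perp) = n_f - n_g/2$, one has $\lambda^2 = c^2 - d^2$, and the changes of variables $\kappa = \kappa_f + \kappa_g$, $\mu = -\kappa_f + \kappa_g$ yield $S = \kappa\,\omega + \mu\,\omega^\perp$ and $(\lambda,\bar\xi) = \rho y h c + \mu d/(2\pi h)$. The image $\Lambda^* = \{(c,d)\in\tfrac{1}{2}\mathbb{Z}^2 : c-d\in\mathbb{Z},\ c+d\in 2\mathbb{Z}\}$ is parametrized by $(c,d) = (\tilde m + a/2, \tilde n + a/2)$ with $\tilde m,\tilde n\in\mathbb{Z}$, $a\in\{0,1\}$ subject to $a+\tilde m+\tilde n \equiv 0 \pmod 2$. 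Enforcing this parity condition via the $\mathbb{Z}_2$-delta identity
\begin{equation*}
\tfrac{1}{2}\bigl(1 + (-1)^{a+\tilde m+\tilde n}\bigr) = \tfrac{1}{2}\sum_{b\in\{0,1\}}(-1)^{b(a+\tilde m+\tilde n)}
\end{equation*}
converts the lattice sum into the separable double sum $\tfrac{1}{2}\sum_{a,b}(-1)^{ab}\sum_{\tilde m,\tilde n}(-1)^{b(\tilde m+\tilde n)}\,g(\tilde m + a/2)\,h(\tilde n + a/2)$, where $g(c) = e^{-i\pi\bar\tau c^2 - 2\pi i\rho y h c}$ and $h(d) = e^{i\pi\tau d^2 - i\mu d/h}$. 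Applying the identity $\sum_n (-1)^{bn}\,e^{i\pi\tau(n+a/2)^2 + 2\pi i(n+a/2)v} = e^{-i\pi ab/2}\,\vartheta_{ab}(v|\tau)$ to each factor, together with $\vartheta_{ab}(-v|\tau) = (-1)^{ab}\vartheta_{ab}(v|\tau)$, collapses the accumulated phases to give precisely a product $(-1)^{ab}\,\overline{\vartheta_{ab}(\rho y h|\tau)}\,\vartheta_{ab}(\mu/(2\pi h)|\tau)$. Matching of the non-holomorphic Gaussian prefactor $\exp((\pi/2y)(\bar\xi_+^2 - \bar\xi_-^2))$ on $\mathbb{C}\mathrm{P}^1\times\mathbb{C}\mathrm{P}^1$ with $\exp((\pi/2y)\bar\xi_+^2 + \mu^2/(8\pi y h^2))$ from (\ref{siegnar_CP2_bu}) then yields the integrand identity $\bar\Theta = \tfrac{1}{2}\sum_{a,b}(-1)^{ab}\widehat{\bar\Theta}_{ab}$ under the natural identification $\omega\leftrightarrow\mathrm{H}$.

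The remaining modular factor $\widehat f(p,\kappa)$ in the definition (\ref{u-plane_integral}) of the $u$-plane integral is common to both manifolds, because both have $\sigma = 0$ and satisfy $S^2 = \widehat S^2 = \kappa^2 - \mu^2$ under the stated change of variables. Since the sum over $(a,b)$ is finite, it commutes with the regularization procedure at $\tau = \infty$, and extracting the constant $q$-coefficient term by term and invoking Corollary~\ref{evaluation2} for each $(a,b)$ produces $\widehat{D}^{\,ab}_{mn}$; summing with weights $\tfrac{1}{2}(-1)^{ab}$ then gives (\ref{generating_function2}). The principal obstacle I expect is the phase-tracking in the previous paragraph: one must verify that the parity constraint defining $\Lambda^*$ generates exactly the signs $(-1)^{ab}$ needed to convert $\overline{\vartheta_{ab}}\,\vartheta_{ab}$ into $\widehat{\bar\Theta}_{ab}$ with no residual fourth-roots of unity, and that the identification of the $\omega$-direction in $H^2(\mathbb{C}\mathrm{P}^1\times\mathbb{C}\mathrm{P}^1,\mathbb{R})$ with the $\mathrm{H}$-direction in $H^2(\widehat{\mathbb{C}\mathrm{P}^2},\mathbb{R})$ respects both the Jacobi-theta arguments and the Gaussian prefactor of the Siegel-Narain theta function simultaneously.
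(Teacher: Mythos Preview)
Your proposal is correct and follows essentially the same route as the paper: both arguments rewrite the Siegel--Narain theta function for $\mathbb{C}\mathrm{P}^1\times\mathbb{C}\mathrm{P}^1$ via the orthogonal change of variables along $\omega$ and $\omega^\perp$, insert a $\mathbb{Z}_2$-delta function to enforce the residual parity constraint, and thereby obtain the identity $\bar\Theta=\tfrac{1}{2}\sum_{a,b}(-1)^{ab}\widehat{\bar\Theta}_{ab}$, after which Corollary~\ref{evaluation2} applies termwise. The only cosmetic difference is that the paper carries out the computation at the perturbed period point $\omega_\epsilon=(\tfrac{1}{2}+\tfrac{\epsilon}{2})\mathrm{F}+(1-\epsilon)\mathrm{G}$ so as to match literally the period point $\widehat\omega=\mathrm{H}-\epsilon\mathrm{E}$ of Corollary~\ref{evaluation2}, and then lets $\epsilon\to 0$; since the resulting coefficients $\widehat D^{\,ab}_{mn}$ are $\epsilon$-independent this does not affect the substance of your argument.
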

\noindent
We will give the detailed proof in Section \ref{proof}.

\subsection{Relations between $\mathbb{C}\mathrm{P}^1 \times \mathbb{C}\mathrm{P}^1$ and $\widehat{\mathbb{C}\mathrm{P}^2}$}
To state certain relations between the Donaldson invariants of $\mathbb{C}\mathrm{P}^1 \times \mathbb{C}\mathrm{P}^1$ and the Donaldson invariants of $\widehat{\mathbb{C}\mathrm{P}^2}$ 
we introduce  $\bar{\mathrm{F}}= \mathrm{H} - \mathrm{E}$ 
and $\bar{\mathrm{G}}= \mathrm{H} + \mathrm{E}$
on $\widehat{\mathbb{C}\mathrm{P}^2}$ such that $(\bar{\mathrm{F}},\bar{\mathrm{G}})=2$. 
On $\mathbb{C}\mathrm{P}^1 \times \mathbb{C}\mathrm{P}^1$ we set
$S = \kappa_f \, \mathrm{f} +  2\, \kappa_g \, \mathrm{g}$, and
on $\widehat{\mathbb{C}\mathrm{P}^2}$ we set
$\widehat{S} = \kappa_f \, \bar{\mathrm{f}} + \kappa_g \, \bar{\mathrm{g}}$.
We have the following result:
\begin{lemma}
On $\mathbb{C}\mathrm{P}^1 \times \mathbb{C}\mathrm{P}^1$
let $\omega= \frac{1}{2} \,  \mathrm{F} +\mathrm{G}$ be the period point of the metric.
Under the assumption of Conjecture \ref{conjecture_light} it follows that
\begin{equation}
\label{relations}
\begin{split}
Z_{\tau=\infty} \Big( \mathbb{C}\mathrm{P}^1 \times \mathbb{C}\mathrm{P}^1, \omega, \frac{1}{2} \, \mathrm{F}, 
 0 \Big| \, p, S \Big) 
&- Z_{\tau=\infty} \Big( \mathbb{C}\mathrm{P}^1 \times \mathbb{C}\mathrm{P}^1, \omega, 0,
 0 \Big| \, p, S \Big) \\
= \quad  Z_{\tau=\infty} \Big( \widehat{\mathbb{C}\mathrm{P}^2}, \mathrm{H}+, \frac{1}{2} \, \bar{\mathrm{F}}, 
   - \bar{\mathrm{F}} \Big| \, p, \widehat{S} \Big)
&- Z_{\tau=\infty} \Big( \widehat{\mathbb{C}\mathrm{P}^2},  \mathrm{H}+,  0,
   - \bar{\mathrm{F}} \Big| \, p, \widehat{S} \Big) \quad.
\end{split}
\end{equation}
\end{lemma}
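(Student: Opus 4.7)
My plan is to verify (\ref{relations}) by direct evaluation of both sides in terms of the coefficients $\widehat{D}^{\,ab}_{mn}$ introduced in Corollary~\ref{evaluation2}. The key step on the left-hand side is to extend Proposition~\ref{prop1} to the shifted choice $\lambda_0 = \tfrac{1}{2}\mathrm{F}$. The $\mathbb{Z}_2$-delta function insertion and subsequent ``summing over spin structures'' decomposition of the Siegel-Narain theta function used to prove Proposition~\ref{prop1} still applies, but the half-lattice translation is asymmetric with respect to the hyperbolic pairing $(\mathrm{F},\mathrm{G})=1$, so the sign factor attached to the $(a,b)$-spin-structure contribution changes from $(-1)^{ab}$ to $(-1)^{(a+1)b}$. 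This produces the formula stated in Theorem~\ref{main_prop}(4); the hypothesis of Conjecture~\ref{conjecture_light} enters at this point because it is what allows the resulting $u$-plane integral to be identified with the $\mathrm{SO}(3)$-Donaldson generating function used implicitly in the comparison.

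Subtracting the two formulas termwise, the combination $(-1)^{(a+1)b}-(-1)^{ab}$ vanishes for $b=0$, equals $-2$ for $(a,b)=(0,1)$, and equals $+2$ for $(a,b)=(1,1)$. After cancelling the overall prefactor $\tfrac{1}{2}$, the left-hand side of (\ref{relations}) reduces to
\begin{equation*}
\sum_{m,n\in\mathbb{N}_0}\frac{p^m}{m!}\frac{\kappa^{2n}}{(2n)!}\,\widehat{D}^{\,11}_{mn}
\;-\;\sum_{m,n\in\mathbb{N}_0}\frac{p^m}{m!}\frac{\kappa^{2n+1}}{(2n+1)!}\,\widehat{D}^{\,01}_{mn}.
\end{equation*}
For the right-hand side, Corollary~\ref{evaluation2} applies directly. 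The pair $(2\widehat{\lambda}_0,\widehat{w}_2)=(\bar{\mathrm{F}},-\bar{\mathrm{F}})$ corresponds to parameter values $(a,b)=(1,1)$ and produces the first sum above, while $(0,-\bar{\mathrm{F}})$ corresponds to $(a,b)=(0,1)$ and produces the second. The identities $\bar{\mathrm{f}}=\mathrm{h}-\mathrm{e}$ and $\bar{\mathrm{g}}=\mathrm{h}+\mathrm{e}$ rewrite $\widehat{S}=\kappa_f\bar{\mathrm{f}}+\kappa_g\bar{\mathrm{g}}$ as $\kappa\,\mathrm{h}+\mu\,\mathrm{e}$ with $\kappa=\kappa_f+\kappa_g$ and $\mu=-\kappa_f+\kappa_g$, matching the reparametrization used on the left-hand side, so the two sides agree term by term.

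The main obstacle is the first step: confirming that the shifted spin-structure decomposition produces precisely the sign $(-1)^{(a+1)b}$ rather than any other weighting. This requires careful bookkeeping of how the half-lattice translation $\lambda_0=\tfrac{1}{2}\mathrm{F}$ transforms the four theta characteristics, and in particular how the asymmetry between the $\mathrm{F}$ and $\mathrm{G}$ directions propagates through the Poisson-summation identities that underlie Proposition~\ref{prop1}. Once this sign table is pinned down, the matching of the two sides is a routine arithmetic cancellation, and the change of variables between $(\kappa_f,\kappa_g)$ and $(\kappa,\mu)$ is the same on both manifolds.
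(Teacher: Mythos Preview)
Your approach is genuinely different from the paper's, and in fact more direct. The paper does not compute either side of~(\ref{relations}) explicitly. Instead it invokes Theorem~5.3 of G\"ottsche--Zagier \cite{GoettscheZagier}, which establishes the analogue of~(\ref{relations}) at the level of Donaldson generating functions, and then appeals to the equality of the $u$-plane integral with the Donaldson generating function on each of the four terms (Corollaries~\ref{MW_CP1xCP1} and~\ref{MW_CP2bu} for three of them, and Conjecture~\ref{conjecture_light} for the $\mathrm{SU}(2)$ term on $\widehat{\mathbb{C}\mathrm{P}^2}$). Your route --- extend the spin-structure decomposition of Proposition~\ref{prop1} to $\lambda_0=\tfrac{1}{2}\mathrm{F}$, subtract, and match against Corollary~\ref{evaluation2} --- bypasses the Donaldson side entirely and stays within the $u$-plane calculus.

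Your sign bookkeeping is essentially right: shifting $\lambda_0$ to $\tfrac{1}{2}\mathrm{F}$ forces the parity constraint on $N_1-N_2$ to become \emph{odd} rather than even, so the $\mathbb{Z}_2$-delta acquires an extra factor $(-1)^b$, turning $(-1)^{ab}$ into $(-1)^{(a+1)b}$. Once this is verified the remaining arithmetic goes through as you describe.

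However, your explanation of where Conjecture~\ref{conjecture_light} enters is wrong. The lemma is a statement about $Z_{\tau=\infty}$, i.e.\ about $u$-plane integrals only; no identification with Donaldson invariants is ``used implicitly in the comparison.'' Your direct computation, if carried out, proves~(\ref{relations}) \emph{without} invoking the conjecture at all --- it would actually strengthen the paper's lemma by removing that hypothesis. The conjecture is needed only in the paper's indirect proof, precisely because the paper routes the argument through G\"ottsche--Zagier's Donaldson-invariant identity. You should either drop the claim that the conjecture is needed, or recognise that your argument yields a sharper statement than the one being proved.
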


\begin{remark}
In \cite[Theorem 5.3]{GoettscheZagier}, G\"ottsche and Zagier proved a general version of the relation (\ref{relations}) for the generating function of Donaldson invariants of $\mathbb{C}\mathrm{P}^1 \times \mathbb{C}\mathrm{P}^1$ and the blowup of $\mathbb{C}\mathrm{P}^2$. The relation (\ref{relations}) but for the generating
function of the Donaldson invariants is obtained by setting
$a=(1+\epsilon)/2$ and $b=(1-\epsilon)/2)$ in \cite[Theorem 5.3]{GoettscheZagier} and then taking the limit $\epsilon \to 0$.
\end{remark}

\begin{proof}
Based on the previous remark the proof follows from the fact that the $u$-plane integral is equal to the generating function for the Donaldson invariants. For $X=\mathbb{C}\mathrm{P}^1 \times \mathbb{C}\mathrm{P}^1$ this follows from Corollary \ref{MW_CP1xCP1}. For the gauge group 
$\mathrm{SO}(3)$ and $\mathbb{C}\mathrm{P}^2\# \,\overline{\mathbb{C}\mathrm{P}^2}$ the statement follows from Corollary \ref{MW_CP2bu}. For the gauge group $\mathrm{SU}(2)$ we need to assume that Conjecture \ref{conjecture_light} is true. Under this assumption it follows that the $u$-plane integral for the gauge group $\mathrm{SU}(2)$ on $\mathbb{C}\mathrm{P}^2\# \,\overline{\mathbb{C}\mathrm{P}^2}$ agrees with the $\mathrm{SU}(2)$-Donaldson invariants since both satisfy the same blowup formula (\ref{blowup_SU2})
and the same wall-crossing formula (\ref{wcross}).
\end{proof}

\begin{corollary}
On $\mathbb{C}\mathrm{P}^1 \times \mathbb{C}\mathrm{P}^1$
let $\omega=  \frac{1}{2} \, \mathrm{F} + \mathrm{G}$ be the period point of the metric. Let
$2\,\lambda_0= \mathrm{F}$ be an integral lifting of $w_2(E)$. 
Under the assumption of Conjecture \ref{conjecture_light} it follows that
for $(X,\omega,\lambda_0,w_2=0)$ the $u$-plane integral in the variables
$p \, \mathrm{x} \in H_0(X, \mathbb{Z})$, $S=\kappa_f \, \mathrm{f} + 2 \, \kappa_g \, \mathrm{g} \in H_2(X, \mathbb{Z})$
is
\begin{equation}
\label{formula2}
 Z_{\tau=\infty}
  = \frac{1}{2} \, \sum_{a,b\,\in \lbrace 0,1\rbrace}
  \;  \sum_{m,n \in \mathbb{N}_0} \dfrac{p^m}{m!} \;  \dfrac{\kappa^{2n-ab+1}}{(2n-ab+1)!} \; (-1)^{(a+1)b} \; \widehat{D}^{\,ab}_{mn} 
\end{equation}
where $\widehat{D}^{\,ab}_{mn}$ was defined in (\ref{coefficients2}),
and $\kappa= \kappa_f + \kappa_g$, $\mu=-\kappa_f + \kappa_g$.
\end{corollary}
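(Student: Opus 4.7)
The plan is to derive (\ref{formula2}) as an algebraic consequence of the three ingredients already in place: Proposition~\ref{prop1}, which evaluates the $u$-plane integral on $\mathbb{C}\mathrm{P}^1 \times \mathbb{C}\mathrm{P}^1$ in the chamber $\omega = \frac{1}{2}\mathrm{F}+\mathrm{G}$ for $\lambda_0=0$; the preceding lemma, which expresses the difference between the $\lambda_0 = \frac{1}{2}\mathrm{F}$ and $\lambda_0=0$ generating functions on $\mathbb{C}\mathrm{P}^1 \times \mathbb{C}\mathrm{P}^1$ as the corresponding difference on $\widehat{\mathbb{C}\mathrm{P}^2}$; and Corollary~\ref{evaluation2}, which evaluates the blowup side explicitly in terms of the $\widehat{D}^{\,ab}_{mn}$. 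Adding the output of the lemma to Proposition~\ref{prop1} will produce the desired expression, provided the sign pattern matches.

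First I would subtract the formula of Proposition~\ref{prop1} from the conjectured formula (\ref{formula2}) at the level of coefficients. The $b=0$ contributions cancel, since $(-1)^{(a+1)\cdot 0} = (-1)^{a\cdot 0} = 1$, while for $b=1$ the sign flips from $(-1)^{a}$ to $-(-1)^{a}$. After dividing by $\frac{1}{2}$ the net predicted contribution to $Z_{\tau=\infty}(X,\omega,\tfrac{1}{2}\mathrm{F},0) - Z_{\tau=\infty}(X,\omega,0,0)$ is
\begin{equation*}
\sum_{m,n \in \mathbb{N}_0} \dfrac{p^m}{m!} \dfrac{\kappa^{2n-a+1}}{(2n-a+1)!} \left( \widehat{D}^{\,1,1}_{mn} - \widehat{D}^{\,0,1}_{mn} \right),
\end{equation*}
with $\kappa = \kappa_f+\kappa_g$ and $\mu = -\kappa_f+\kappa_g$.

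Next I would evaluate the right-hand side of the lemma's equation~(\ref{relations}) using Corollary~\ref{evaluation2} applied to $\widehat{X} = \widehat{\mathbb{C}\mathrm{P}^2}$ with $\widehat{w}_2$ an integral lift of $-\bar{\mathrm{F}} = -(\mathrm{H}-\mathrm{E})$, i.e.\ $b=1$. The two choices $\widehat{\lambda}_0 = \frac{1}{2}\bar{\mathrm{F}}$ and $\widehat{\lambda}_0=0$ correspond to $a=1$ and $a=0$ respectively in the parametrization $2\widehat{\lambda}_0 = a(\mathrm{H}-\mathrm{E})$, so the difference of $u$-plane integrals on $\widehat{\mathbb{C}\mathrm{P}^2}$ is precisely $\sum_{m,n} \frac{p^m}{m!} \frac{\kappa^{2n-a+1}}{(2n-a+1)!} (\widehat{D}^{\,1,1}_{mn} - \widehat{D}^{\,0,1}_{mn})$, in agreement with the expression obtained above. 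Combining this equality with Proposition~\ref{prop1} for the $\lambda_0=0$ piece then yields (\ref{formula2}).

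The main obstacle is bookkeeping: keeping the parameterization of $(a,b)$ consistent between the $\mathbb{C}\mathrm{P}^1\times\mathbb{C}\mathrm{P}^1$ side, where the Siegel--Narain theta function has been decomposed via the $\mathbb{Z}_2$-delta insertion of the proof of Proposition~\ref{prop1}, and the $\widehat{\mathbb{C}\mathrm{P}^2}$ side, where $a$ encodes $2\widehat{\lambda}_0 \in \{0, \bar{\mathrm{F}}\}$ and $b=1$ encodes the non-trivial Stiefel--Whitney class of the tangent bundle. Once this identification is secured and the sign manipulation $(-1)^{(a+1)b} - (-1)^{ab} = -2\cdot(-1)^{a}\cdot \delta_{b,1}$ is verified, the corollary follows immediately from the lemma, Proposition~\ref{prop1}, and Corollary~\ref{evaluation2}, without any further analytic input.
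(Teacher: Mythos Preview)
Your proposal is correct and follows exactly the paper's route: use the lemma (relation~(\ref{relations})) to write the $\lambda_0=\tfrac{1}{2}\mathrm{F}$ generating function as the $\lambda_0=0$ one from Proposition~\ref{prop1} plus a difference evaluated on $\widehat{\mathbb{C}\mathrm{P}^2}$ via Corollary~\ref{evaluation2}, and then check the resulting sign pattern $(-1)^{(a+1)b}-(-1)^{ab}=-2(-1)^a\delta_{b,1}$. One small notational slip: in your displayed difference the exponent of $\kappa$ must vary with the term (it is $2n$ for $\widehat{D}^{\,11}_{mn}$ and $2n+1$ for $\widehat{D}^{\,01}_{mn}$), so the two summands cannot share a common prefactor $\kappa^{2n-a+1}/(2n-a+1)!$ as written.
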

\begin{proof}
Assuming Conjecture \ref{conjecture_light}, Equation (\ref{relations}) 
provides a formula for the $u$-plane integral for the gauge group $\mathrm{SU}(2)$
on $\mathbb{C}\mathrm{P}^1 \times \mathbb{C}\mathrm{P}^1$ in terms of the
$u$-plane integral for the gauge group $\mathrm{SO}(3)$ on $\mathbb{C}\mathrm{P}^1 \times \mathbb{C}\mathrm{P}^1$ and the $u$-plane integrals on $\widehat{\mathbb{C}\mathrm{P}^2}$.
Using the results of Proposition \ref{prop1} and Corollary \ref{evaluation2} we obtain Equation
(\ref{formula2}).
\end{proof}

\section{The proof of Theorem \ref{main_prop}}
\label{proof}
\begin{proof}
We first prove Proposition \ref{prop1}.
To do so we compute the Siegel Narain theta function of $\mathbb{C}\mathrm{P}^1 \times \mathbb{C}\mathrm{P}^1$ for $w_2=0$, $\lambda_0=0$, and the period point 
\begin{equation}
 \omega = \left( \frac{1}{2} + \frac{\epsilon}{2} \right) \, \mathrm{F} + ( 1 - \epsilon ) \, \mathrm{G} \;.
\end{equation}
If we write $\lambda = M_1 \, \mathrm{F} + M_2 \, \mathrm{G}$ with $M_1,M_2 \in \mathbb{Z}$
we obtain
\begin{equation}
 \begin{split}
  -i \pi  \bar{\tau} \, (\lambda_+)^2 & =  -i \pi  \bar{\tau} \; \dfrac{ (\lambda,\omega)^2}{(\omega,\omega)} 
=  -i \pi  \bar{\tau} \, \dfrac{ \left\lbrack 2 \, M_1 \, (1-\epsilon) +  M_2 \, (1+\epsilon) \right\rbrack^2}{4 \, (1-\epsilon^2)} \;,\\
- i \pi  \tau \, (\lambda_-)^2 & = -i \pi \tau \, (\lambda - \lambda_+)^2 
=  i \pi  \tau \; \dfrac{ \left\lbrack 2\, M_1 \, (1-\epsilon) - M_2 \, (1+\epsilon) \right\rbrack^2}{4 \, (1-\epsilon^2)} \;.
 \end{split}
\end{equation}
For $\bar{\xi} = \rho \, y \, h \,\omega + S_-/(2\pi h)$  and $S= \kappa_f \, \mathrm{f} + 2\,\kappa_g \, \mathrm{g}$ 
we also have that 
\begin{equation}
 \begin{split}
 (\xi_+)^2 & = \rho^2 \, y^2 \, h^2 \, (1- \epsilon^2) \;,\\
 (\bar{\xi}_-)^2 & = - \dfrac{\left\lbrack \kappa_f (1-\epsilon) - \kappa_g \, (1+\epsilon) \right\rbrack^2}{
 4 \, \pi^2 \, h^2 (1-\epsilon^2)} \;,\\
 \frac{\pi}{2y} \Big(\bar{\xi}_{+}^2-\bar{\xi}_{-}^2\Big) & =
 \frac{1}{2} \, \pi \, y \, \rho^2 \, h^2 \, (1-\epsilon^2)
  - \dfrac{ \left\lbrack  \kappa_f \, (1-\epsilon) - \kappa_g \, (1+\epsilon) \right\rbrack}{8 \, \pi \, y \, h^2 \, (1-\epsilon^2)}\;,\\
- 2 \pi i\, (\lambda,\bar\xi) & = - i \pi\, y \, h \, \rho \left\lbrack 2\, M_1 \, (1-\epsilon) +  M_2\, (1+\epsilon)\right\rbrack\\
& + \dfrac{i \, \left\lbrack 2\, M_1 \, (1-\epsilon) -  M_2 \, (1+\epsilon)\right\rbrack 
\left\lbrack \kappa_f \, (1- \epsilon) - \kappa_g \, (1+\epsilon)\right\rbrack}{2 \, h \, (1-\epsilon^2)} \;.
 \end{split}
 \end{equation}
In the equation for the Siegel-Narain theta function 
\begin{equation}
\label{SNTf}
\begin{split}
\bar \Theta(\xi)\,   = \, &\exp\left\lbrack \frac{\pi}{2y} \Big(\bar{\xi}_{+}^2-\bar{\xi}_{-}^2\Big)\right\rbrack \\
\times  \sum_{M_1,M_2 \in \mathbb{Z} }
& \exp\Big\lbrack - i \pi \bar\tau (\lambda_+)^2 - i \pi   \tau(\lambda_-  )^2
- 2 \pi i\, (\lambda,\bar\xi) \Big\rbrack 
\end{split}
\end{equation}
we replace $M_1 = (N_1 - N_2)/2$ and $M_1= N_1 + N_2 + a$,
insert the $\mathbb{Z}_2$-delta function
\begin{equation}
 \delta\Big(N_1 -N_2 \; (\mathrm{mod} \, 2) \Big) = \frac{1}{2} \, \sum_{b \, \in \lbrace 0,1\rbrace} \;
 \exp{\left\lbrack \pi i \, (N_1-N_2) \, b \,\right\rbrack} \;,
\end{equation}
and take the sums over $N_1, N_2 \in \mathbb{Z}$ and $a,b \in \lbrace 0,1\rbrace$. We obtain
\begin{equation}
\label{SNTf2}
\begin{split}
\bar \Theta(\xi)\,   = \, & \frac{1}{2} \, \sum_{a,b \, \in \lbrace 0,1\rbrace} \; \exp\left\lbrack \frac{\pi}{2y} \Big(\bar{\xi}_{+}^2-\bar{\xi}_{-}^2\Big)\right\rbrack \\
\times  \sum_{N_1,N_2 \in \mathbb{Z} }
& \exp\Big\lbrack - i \pi \bar\tau (\lambda_+)^2 - i \pi   \tau(\lambda_-  )^2
- 2 \pi i\, (\lambda,\bar\xi) + \pi i \, (N_1-N_2) \, b \,\Big\rbrack  \;.
\end{split}
\end{equation}
A short calculation shows that (\ref{SNTf2}) is equal to 
\begin{equation}
\bar{\Theta}(\xi) \, = \frac{1}{2} \, \sum_{a,b \in \lbrace 0,1\rbrace} \, (-1)^{ab} \; \widehat{\bar{\Theta}}_{ab}(\xi) 
\end{equation}
where $\widehat{\bar{\Theta}}_{ab}(\xi)$ is the Siegel-Narain theta function for the lattice
$H^2 = \lbrace N_1 \, \mathrm{H} + N_2 \, \mathrm{E} \, |\,  N_1, N_2 \in \mathbb{Z} \rbrace $ together with
$2 \, \widehat{\lambda}_0 = a \, (\mathrm{H} - \mathrm{E})$, $\widehat{w}_2 = b \, (-\mathrm{H} + \mathrm{E})$, 
and $\widehat{\omega} = \mathrm{H} - \epsilon\, \mathrm{E}$. If we also set $\widehat{S}=
\kappa \, \mathrm{h} + \mu \, \mathrm{e}$ with $\kappa=\kappa_f + \kappa_g$ and $\mu=-\kappa_f+\kappa_g$
then $\widehat{\bar{\Theta}}_{ab}(\xi)$ is the Siegel-Narain theta
function already evaluated in Equation (\ref{siegnar_CP2_bu}).
Since $S^2=\widehat{S}^2$ and $(S,\omega)=(\widehat{S},\widehat{\omega})$ applying Corollary \ref{evaluation2} and taking the limit $\epsilon \to 0$ then yields (\ref{generating_function2}).
This concludes the proof of Proposition \ref{prop1}. 

\bigskip

For $\mathbb{C}\mathrm{P}^2 \# \,\overline{\mathbb{C}\mathrm{P}^2}$ the $u$-plane integral was evaluated in (\ref{generating_function}). Corollary~\ref{MW_CP2bu} proves that the $u$-plane integral equals the generating 
function for the Donaldson invariants for the gauge group $\mathrm{SO}(3)$. This proves Part (1) of Proposition 
\ref{main_prop}. The same statement is true for the gauge group  $\mathrm{SU}(2)$ if we assume Conjecture \ref{conjecture_light}.  The reason is that the $u$-plane integral already satisfies the same blowup formula (\ref{blowup_SU2}) and the same wall-crossing formula (\ref{wcross}) as the generating function for the Donaldson invariants.
This proves Part (2) of Theorem \ref{main_prop}.

Corollary \ref{MW_CP1xCP1} proves that the $u$-plane integrals (\ref{generating_function2}) and (\ref{formula2}) are the generating functions for the Donaldson invariants of $\mathbb{C}\mathrm{P}^1 \times \mathbb{C}\mathrm{P}^1$ for the gauge groups $\mathrm{SU}(2)$ and $\mathrm{SO}(3)$ respectively. Equations (\ref{generating_function2}) and (\ref{formula2}) provide the explicit formulas for the $u$-plane integrals. In the case of the gauge group $\mathrm{SU}(2)$ the assumption
of Conjecture \ref{conjecture_light} is needed since we have used Equation~(\ref{relations}).
This proves Part (3) and (4) of Theorem \ref{main_prop}.
\end{proof}

\section*{Acknowledgments}
\noindent
I would like to thank David Morrison and Ken Ono for many helpful discussions.

\end{document}